\numberwithin{equation}{section}
\newtheorem{theorem}{Theorem}[section]
\newtheorem{lemma}[theorem]{Lemma}
\newtheorem{proposition}[theorem]{Proposition}
\newtheorem{corollary}[theorem]{Corollary}
\newtheorem*{method}{Hamilton's method}
\theoremstyle{definition}
\newtheorem{example}[theorem]{Example}
\newtheorem{remark}[theorem]{Remark}
\newenvironment{romenumerate}[1][0pt]{
\addtolength{\leftmargini}{#1}\begin{enumerate}
 }{\end{enumerate}}
\newcounter{oldenumi}
{\setcounter{oldenumi}{\value{enumi}}
\begin{romenumerate} \setcounter{enumi}{\value{oldenumi}}}
{\end{romenumerate}}
\newcounter{thmenumerate}
\newcounter{romxenumerate}   
\newcounter{xenumerate}   
\newcommand{\refT}[1]{Theorem~\ref{#1}}
\newcommand{\refC}[1]{Corollary~\ref{#1}}
\newcommand{\refL}[1]{Lemma~\ref{#1}}
\newcommand{\refR}[1]{Remark~\ref{#1}}
\newcommand{\refS}[1]{Section~\ref{#1}}
\newcommand{\refSS}[1]{Section~\ref{#1}}
\newcommand{\refE}[1]{Example~\ref{#1}}
\newcommand{\refF}[1]{Figure~\ref{#1}}
\newcommand\marginal[1]{\ifdraft
{\marginpar[\raggedleft\tiny #1]{\raggedright\tiny #1}}
{\message{ERROR marginal requires draft option}}}
\xdef\klockan{\the\count1.0\the\count255}
\xdef\klockan{\the\count1.\the\count255}\fi
\newcommand\nopf{\qed}   
\newcommand{\sumim}{\sum_{i=1}^m}
\newcommand\set[1]{\ensuremath{\{#1\}}}
\newcommand\bigpar[1]{\bigl(#1\bigr)}
\newcommand\Bigpar[1]{\Bigl(#1\Bigr)}
\newcommand\biggpar[1]{\biggl(#1\biggr)}
\newcommand\lrpar[1]{\left(#1\right)}
\newcommand\xcpar[1]{\{#1\}}
\newcommand\bigabs[1]{\bigl|#1\bigr|}
\def\rompar(#1){\textup(#1\textup)}    
\newcommand\Bigparfrac[2]{\Bigpar{\frac{#1}{#2}}}
\def\xexp(#1){e^{#1}}
\newcommand\ceil[1]{\lceil#1\rceil}
\newcommand\floor[1]{\lfloor#1\rfloor}
\newcommand\fract[1]{\{#1\}}
\newcommand\mtoo{\ensuremath{{m\to\infty}}}
\newcommand\Ntoo{\ensuremath{{N\to\infty}}}
\newcommand\punkt[1]{\if.#1\else.\spacefactor1000\fi{#1}}
\newcommand\iid{i.i.d\punkt}    
\newcommand\ie{i.e\punkt}
\newcommand\eg{e.g\punkt}
\newcommand\cf{cf\punkt}
\newcommand{\aex}{a.e\punkt}
\newcommand\ii{\mathrm{i}}
\newcommand{\tend}{\longrightarrow}
\newcommand\dto{\overset{\mathrm{d}}{\tend}}
\newcommand\pto{\overset{\mathrm{p}}{\tend}}
\newcommand\bbN{\mathbb N}
\newcommand\bbQ{\mathbb Q}
\newcommand\bbZ{\mathbb Z}
\newcounter{CC}
\newcounter{cc}
\newcommand\E{\operatorname{\mathbb E{}}}
\renewcommand\P{\operatorname{\mathbb P{}}}
\newcommand\Exp{\operatorname{Exp}}
\newcommand\Po{\operatorname{Po}}
\newcommand\Be{\operatorname{Be}}
\newcommand\rise[1]{^{\overline{#1}}}
\newcommand\Modm{\operatorname{Mod}_m}
\newcommand\ga{\alpha}
\newcommand\gd{\delta}
\newcommand\gD{\Delta}
\newcommand\gG{\Gamma}
\newcommand\gl{\lambda}
\newcommand\eps{\varepsilon}
\newcommand\ett[1]{\boldsymbol1\xcpar{#1}}
\newcommand\qw{^{-1}}
\renewcommand{\=}{:=}
\newcommand\intoo{\int_0^\infty}
\newcommand\oi{[0,1]}
\newcommand\oix{[0,1)}
\newcommand\ooo{[0,\infty)}
\newcommand\setoi{\set{0,1}}
\newcommand\dd{\,\mathrm{d}}
\newcommand\pp{{\mathbf p}}
\newcommand\ud{uniformly distributed}
\newcommand\tl{\tilde\ell}
\newcommand\ttq{q}
\newcommand\tq{\tilde q}
\newcommand\hq{\hat q}
\newcommand\hS{\widehat S}
\newcommand\hl{\tl}
\newcommand\setm{\set{0,\dots,m-1}}
\newcommand\qi{^{(i)}}
\newcommand\qx[1]{^{(#1)}}
\newcommand\iij{I_j^{+}}
\newcommand\jij{I_j^{-}}
\newcommand\ppm{\ensuremath{p_1,\dots,p_m}}
\newcommand\ppom{\ensuremath{p_{(1)}\ge\dots \ge p_{(m)}}}
\newcommand\linQ{linearly independent over\/ $\bbQ$}
\newcommand\subx[1]{_{\mathrm{#1}}}
\newcommand\mux{\mu\subx}
\newcommand\Da{\ensuremath{\Delta_a}}
\newcommand\D{\ensuremath{\Delta}}
\newcommand\wi{j}
\newcommand\wj{\ensuremath{i}}
\newcommand\mins{\relax}
\newcommand\xx[1]{_{(#1)}}
\newcommand\ddxq{\dd\xq}
\newcommand\xq{\mathbf{x}}
\newcommand\bfr{\mathbf r}
\newcommand\si[1]{S_i^{#1}}
\newcommand\br{\bar r}
\newcommand\REM[1]{{\raggedright\texttt{[#1]}\par\marginal{XXX}}}
\newcommand\urladdrx[1]{{\urladdr{\def~{{\tiny$\sim$}}#1}}}
\begin{document}
\title
{The probability of the Alabama paradox}

\date{12 April, 2011; revised 14 December, 2011}

\author{Svante Janson} 
\address{Department of Mathematics, Uppsala University, 
P.O.\ Box 480,  SE-751 06, Uppsala, Sweden.}
\email{svante@math.uu.se}
\urladdrx{http://www.math.uu.se/~svante/}

\author{Svante Linusson} \thanks{Svante Linusson is a Royal Swedish Academy
  of Sciences Research Fellow supported by a grant from the Knut and Alice
  Wallenberg Foundation.} 
\address{Department of Mathematics, KTH -- Royal Institute of Technology, 
  SE-\hbox{100 44}, Stockholm, Sweden.}
\email{linusson@math.kth.se}
\urladdrx{http://www.math.kth.se/~linusson/}

\keywords{Alabama paradox; election methods; apportionment; proportional
  allocation;   Hamilton's method; method of largest remainder} 
\subjclass[2000]{60C05; 91B12}


\begin{abstract}
Hamilton's method 
is a natural
and common method to distribute seats proportionally between states (or
parties) in a parliament. In USA it has been  
abandoned due to some drawbacks, 
in particular the possibility of the Alabama paradox, 
but it is still in use in many other countries.
In this paper we give, under certain assumptions, a closed formula for the
asymptotic probability, as the number of seats tends to infinity, 
that the Alabama paradox occurs given the vector $p_1,\dots,p_m$
of relative sizes of the states. 

{}From the theorem we deduce a number of consequences. For example it is shown
that the  
expected number of states that will suffer from the Alabama paradox is 
asymptotically bounded above by $1/e$ and on average approximately $0.123$. 
\end{abstract}

\maketitle

\section{Introduction and main result}\label{S:intro}

Proportional representation is desired in various circumstances. One common
case is in elections in many countries, where each party is awarded a number
of seats in parliament proportional to the number of votes.
Since the number of seats has to be an integer, it becomes a mathematical
problem to choose these integers in a way that approximates exact
proportionality, and a number of different methods are in use.

In the United States, elections are done differently (with
single-member constituences), but the same mathematical problem exists
for apportionment to the House of Representatives.
By the Constitution, each state has a number of representatives
that is proportional to its population. However, the Constitution does not
specify by which method the numbers are to be determined.
(The numbers are determined by Congress every tenth year, after a new census.)
Therefore, the choice of method has been subject to much debate since 1791,
see \citet{BY} for a detailed history.
(In 1941, a specific method was chosen by law to be used not only for that
apportionment but also for all coming ones. This has eliminated the need for
debates and new choices every ten years, so there is now much less debate.
For the current method, see \cite{BY}, which also discusses why the method
is slightly biased and could be improved.)

The problem was further complicated by the fact that the Constitution does
not specify the total number of representatives. 
Thus, when discussing apportionment, the Congress discussed not only
different methods but also different sizes of the House.
During the second half of the 19th century, the favourite method
was
\emph{Hamilton's method} (also called \emph{method of largest remainder}), 
which can be described as follows.
(The method was proposed by Alexander Hamilton in 1792 for the first 
apportionment; it was then approved by Congress but vetoed by president
Washington. The method was proposed again by Samuel Vinton in 1850 when it
became law and was used, with some fiddling, for the rest of the century;
see \cite{BY} for details.)

\begin{method}
Suppose that there are $m$ states with populations $P_1,\dots,P_m$, and 
$n$ seats to distribute.
Let $P=\sumim P_i$ be the total population and let $p_i=P_i/P$ be 
the relative population of state $i$, 
\ie, its proportion of the total population. 
Calculate $\mu_i=p_i n$; this is the real number
that would give exact proportionality. First round these down and give 
$\floor{\mu_i}$ seats to state $i$. 
The sum of these numbers is almost always less
than $n$ (the exception is when all $\mu_i$ happen to be integers), and the
remaining seats are given to the states with largest remainders
$\mu_i-\floor{\mu_i}$.  In other words, $\mu_i$ is rounded up for the states
with largest remainders, and the number of states that are rounded up is
determined so that the total number of seats becomes $n$.
\end{method}

The method is simple and intuitive, and it does not bias against small or
large states.
However, in 1881 it was discovered that this method has a surprising and
unwelcome behavior when the total number of seats is changed:
It can happen that some state gets \emph{less} representatives when the
total number is increased.
More precicely, using the population figures from the 1880 census, 
a total of 299 seats would give 8 to Alabama, but a total number of 300
would give only 7 to Alabama, see \cite[p.~39]{BY} for details.
This counterintuitive behaviour got the name \emph{Alabama paradox}, and it
eventually led to the abolishment of Hamilton's method in favour of others
that do not suffer from this defect.
(The same problem was actually observed already 1871, in that case for Rhode
Island, but this went largely unnoticed \cite[p.~38]{BY}.)

\begin{remark}
The method is still used in parliamentary elections in several countries,
either to distribute seats among multi-member consituencies 
(e.g.\ Sweden), or to distribute seats among the parties  (e.g.\ Denmark).
There the number of seats is fixed in advance, so the Alabama paradox
is not an obvious problem. However, the paradox may surface and give strange
behaviour in combination with other rules, and in some election systems
where the method is used a
party that gains a vote can, in exceptional situations, actually lose a seat
in parliament, see \cite{wahlrecht}.
Germany used earlier Hamilton's method (there called Hare--Niemeyer's method)
for federal elections
but changed in 2008 for this reason (although the problem partly remains for
other reasons); the method is still used in several German
states, however.  
\end{remark}

The Alabama paradox is mathematically not strange, once it has been
noticed:
Consider three states A, B and C.
If we increase $n$ to $n+1$, the number $\mu_i=np_i$ is increased by $p_i$.
If, for example, state C is small and states A and B larger, then $\mux C$ 
increases less that $\mux A$ and $\mux B$. 
Suppose for simplicity that none of these numbers
passes an integer, so the integer parts $\floor{\mu_i}$ remains the same for
$n$ and $n+1$ for all three states; 
then their remainders $\rho_i=\mu_i-\floor{\mu_i}$ increase by
$p_i$, and it may happen that the remainders $\rho\subx A$ and $\rho\subx B$
both are 
smaller than $\rho\subx C$ when we distribute $n$ seats, but that both become
larger than $\rho\subx C$ when we increase $n$ to $n+1$. If furthermore C
had the smallest remainder that was rounded up, then the result is that 
C loses one seat while A and B gain one each. (We assume that no
other state interferes.)
A simple numerical example is given in \refF{Falabama}.

\begin{figure}
\begin{tabular}{|c|r|r|r|}
\hline
state & pop. & $\mu_i$ & seats \\
\hline
A & 53 & 5.30 & 5 \\
B & 33 & 3.30 & 3 \\
C & 14 & 1.40 & \bf 2 \\
\hline
sum & 100 & 10.00 & 10\\
\hline
\multicolumn{4}{l}{10 seats}
\end{tabular}
\qquad
\begin{tabular}{|c|r|r|r|}
\hline
state & pop. & $\mu_i$ & seats \\
\hline
A & 53 & 5.83 & \bf6 \\
B & 33 & 3.63 & \bf4 \\
C & 14 & 1.54 & 1 \\
\hline
sum & 100 & 11.00 & 11\\
\hline
\multicolumn{4}{l}{11 seats}
\end{tabular}
\caption{The Alabama paradox. Numbers in boldface are rounded up.
(Population figures may be in thousands or millions, for example.)}
\label{Falabama}
\end{figure}

So the Alabama paradox certainly may happen, and it has occurred, but how
likely is it?

Of course, the description so far is purely deterministic (except for the
necessity to draw lots sometimes when there is a tie); the Alabama
paradox either occurs or not for given parameters. Let us, however, assume 
that the population sizes are given, 
but choose a \emph{random} number $n$ of seats. 
\emph{What is the probability that the
Alabama paradox occurs if $n$ is increased to $n+1$?}

By choosing a random number $n$ we mean choosing $n$ uniformly at random
from \set{1,2,\dots,N} for some large integer $N$, and then taking the limit
(assuming that it exists) as \Ntoo.
Thus, more formally, let $s_i(n)$ be the number of seats
state $i$ receives when $n$ seats are distributed.
Increase the number of seats $n$, by one seat at a time, from 1 to $N$, and
let $\nu_i(N)$ be the number of times that state $i$ suffers from the Alabama
paradox, \ie, the number of $n< N$ such that $s_i(n+1)<s_i(n)$.
If $\nu_i(N)/N$ converges to some value $q_i$ as $N\to\infty$, we say that the
limit $q_i$ is the \emph{probability that state $i$ suffers from the Alabama
  paradox}. 
(This approach, to consider given sizes but  a random number of seats,
is used also in \cite{SJ262} where some other properties of election methods
are studied.) 

In order to calculate this limit (and show that it exists), we will make one
mathematical simplification.
Recall that a set \set{x_1,\dots,x_k} of real numbers is 
\emph{\linQ} if there is no relation
$a_1x_1+\dots+a_kx_k=0$ with all coefficients $a_i$ rational and not all $0$.
(Equivalently, there is no such relation with integer coefficients $a_i$,
not all 0.)
We will assume that the relative population sizes are \linQ.

\begin{remark}\label{Rind}
Mathematically, this assumption is reasonable, since if we choose $\ppm$ at
random (uniformly given that their sum is 1), they will almost surely be \linQ.
However, for the practical problem of apportionment,
the assumption is clearly unreasonable since the populations $P_i$ are
integers and the $p_i$ thus rational numbers. Nevertheless, 
the formula below is a good approximation 
if the numbers $p_i$ have large denominators and there are no relations
\begin{equation}\label{rind}
 a_1p_1+\dots+a_mp_m=0 
\end{equation}
with small integers $a_1,\dots,a_m$.
More precisely, it will be shown in \refSS{SSdep} that for any
$\eps>0$, there is an $A=A(m,\eps)$ such that the value $\ttq_i$ in 
\eqref{ta} or \eqref{tab} differs
from the exact probability   by less than $\eps$ 
for every distribution $(p_i)_1^m$ for which there is no such relation
\eqref{rind}
with integers $a_i$ and $\sum_i|a_i|\le A$; we omit the details.
\end{remark}

We leave it as an open problem to extend the result below and find exact
formulas for all \ppm, and in particular for rational \ppm.
(If $\ppm$ are rational, then the sequence $s_i(n+1)-s_i(n)$ will be
periodic, so the limit $q_i$ certainly exists; the existence in general is
shown in \refSS{SSdep}.)
Note that some modifications are required for rational \ppm.
For example with three states and $p_1=p_2=2/5$, $p_3=1/5$, it is easily
seen that the Alabama paradox never occurs, so the probability is 0 for all
three states. See also Proposition \ref{P:highE}, were it is shown that the
expected number of states  
suffering from the paradox could be arbitrarily close to 1.

We use the standard notations $x_+\=\max\set{x,0}$ and
$x_-\=(-x)_+=-\min\set{x,0}$, noting $x=x_+-x_-$ and $|x|=x_++x_-$. 
Let $e_k(x_1,\dots,x_n)$ denote the \emph{elementary symmetric polynomial}
of degree $k$ in $n$ variables, i.e.\ 
$e_k(x_1,\dots,x_n)\=\sum_{1\le i_1<\dots<i_k\le n} \prod_{j=1}^k
x_{i_j}$. With $\Be(p)$ we mean the Bernoulli distribution;
thus $I\sim\Be(p)$ if $\P(I=0)=1-p$ and $\P(I=1)=p$.
We let $\ppom$ be the population vector $\ppm$
rearranged in increasing order, and let  $q\xx1,\dots, q\xx m$ 
be the corresponding probabilities of the Alabama paradox, which by
\refC{Cmin} below is the vector $q_1,\dots,q_m$ rearranged in increasing order.
For clarity, we will use the notation $p\xx i$ and $q\xx i$
whenever we consider the
states in increasing order, and $p_i,\,q_i$ only when the order is irrelevant.

\begin{theorem}\label{TA}
Suppose that $m$ states have relative sizes $\ppm$, with $\sumim p_i=1$,
and assume that $\ppm$ are \linQ.
Then the probability $q_i$
that state $i$ suffers from the Alabama paradox when we
increase the total number of seats by one equals
\begin{equation}\label{ta}
\ttq_i\=
\frac1m \E\bigpar{\si--\si+-1}_+,  
\end{equation}
where
$\si+=\sum_{j:p_j<p_i} I_j\qi$ and 
$\si-=\sum_{j:p_j>p_i} I_j\qi$ with $I_j\qi\sim\Be(|p_i-p_j|)$
and $I_1\qi,\dots,I_m\qi$ independent.
More explicitly, if the states are ordered with 
$\ppom$, this can be written
\begin{equation}\label{tab}
\ttq_{(i)}=
\frac{1}{m}\sum_{s=0}^{m-i}\sum_{k=2}^{i-1}(-1)^{s+k}
\binom{s+k-2}{s}
e_{k}(\br_{1}^{(i)},\dots,\br_{i-1}^{(i)})
e_s(\br_{i+1}^{(i)},\dots,\br_{m}^{(i)}),
\end{equation}
where $\br_j^{(i)}:=|p_{(i)}-p_{(j)}|$.
\end{theorem}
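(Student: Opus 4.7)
The plan is to reduce $q_i$ to a measure computation on a closed subgroup of the torus via equidistribution, then evaluate that measure by decomposing the paradox event according to the pairwise rank changes between state~$i$ and each other state, and finally obtain the explicit polynomial form~\eqref{tab} by a generating-function manipulation.

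First, by Weyl's equidistribution theorem, the orbit $n\mapsto(\{np_1\},\dots,\{np_m\})$ is equidistributed on the closed subgroup of $\bbT^m$ generated by $(p_1,\dots,p_m)$; since $\ppm$ are \linQ{} and the only rational relation among them is $\sum_j p_j=1$, this subgroup equals $H:=\{x\in\bbT^m:\sum_j x_j\equiv 0\pmod 1\}$. Hence $q_i=\mu_H(E_i)$, where $E_i$ is the paradox event. Writing $r_j=\{np_j\}$, $k=\sum_j r_j\in\bbZ$ for the number of states rounded up, $L=\{j:r_j+p_j\ge 1\}$ for the wrap set, $\ell=|L|$, and $r_j'=\{r_j+p_j\}$, the identity $s_i(n+1)-s_i(n)=\ett{i\in L}+(X_i(n+1)-X_i(n))$ (with $X_i$ the round-up indicator) shows that $E_i=\{i\notin L,\ X_i(n)=1,\ X_i(n+1)=0\}$. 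Unpacking the round-up rule, $E_i$ reads: $i\notin L$, $A_i\le k-1$, and $A_i'\ge k+1-\ell$, where $A_i:=\#\{j\ne i:r_j>r_i\}$ and $A_i'$ is its analogue at step $n+1$.

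The heart of the argument is to compute $\mu_H(E_i)$. For each $j\ne i$, the relative phase $r_j-r_i\bmod 1$ shifts by $p_j-p_i$ between steps $n$ and $n+1$; the rank of $j$ relative to $i$ may change, with the change occurring on an event of length $|p_j-p_i|$ on the torus in the direction determined by the sign of $p_j-p_i$. Using the projection property of $H$ (any $m-1$ of the $r_j$'s are jointly uniform on $[0,1)^{m-1}$), a measure-preserving change of variables on $H$ should show that the joint distribution of these ``crossings'' matches that of independent $\Be(|p_i-p_j|)$ variables, and that conditional on the crossings the paradox condition pins $k\in\{0,\dots,m-1\}$ to an interval of integer length $(S_i^--S_i^+-1)_+$; averaging over the $m$ admissible values of $k$ contributes the $1/m$ prefactor, yielding~\eqref{ta}. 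The main obstacle will be the handling of wrap-arounds: for $j\in L$, the new remainder $r_j'$ jumps discontinuously, contributing to $\ell$, to $A_i'-A_i$, and to the crossings simultaneously; verifying that these contributions recombine so that only the unsigned differences $|p_i-p_j|$ appear is the key technical step.

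Finally, \eqref{tab} follows from~\eqref{ta} by a routine generating-function computation. Expand $P(S_i^-=k)=\sum_{n\ge k}(-1)^{n-k}\binom{n}{k}e_n(\br_1^{(i)},\dots,\br_{i-1}^{(i)})$ using $\prod_{j<i}(1-a_j+a_j z)$ (and similarly for $P(S_i^+=s)$), substitute into $\E(S_i^--S_i^+-1)_+=\sum_{k\ge s+2}(k-s-1)P(S_i^-=k)P(S_i^+=s)$, interchange sums, and apply the binomial identity
\[
\sum_{\substack{k\le n,\ s\le n'\\ k\ge s+2}}(k-s-1)(-1)^{k+s}\binom{n}{k}\binom{n'}{s}=\binom{n+n'-2}{n'}
\]
to collect terms; the coefficient $\binom{k+s-2}{s}$ and the elementary symmetric polynomials in~\eqref{tab} then emerge.
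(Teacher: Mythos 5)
Your overall architecture is the same as the paper's: an equidistribution reduction, a decomposition of the paradox event into pairwise ``crossings'' between state $i$ and each other state (which become independent $\Be(|p_i-p_j|)$ indicators), a discrete variable uniform on $m$ values supplying the factor $1/m$, and a passage from \eqref{ta} to \eqref{tab} by expanding the Bernoulli product and applying a binomial identity (your displayed identity is, after renaming, exactly the paper's $A(k,s)=\binom{s+k-2}{s}$, proved there by induction and Vandermonde; citing it without proof is acceptable since it is routine). The genuine gap is at the step you describe as ``averaging over the $m$ admissible values of $k$ contributes the $1/m$ prefactor.'' The number $k=\sum_j\fract{np_j}$ of states rounded up is \emph{not} uniform on $\setm$: on your subgroup $H$ it equals $\lceil r_1+\dots+r_{m-1}\rceil$ with $(r_1,\dots,r_{m-1})$ uniform on $[0,1)^{m-1}$, so it has an Irwin--Hall-type law (for $m=3$ it is a.s.\ $1$ or $2$, never $0$). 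The quantity that must be shown to be uniform on $m$ values --- and, crucially, asymptotically independent of the crossing indicators, which are functions of the relative phases $\fract{n(p_j-p_i)}$ --- is the position of state $i$ relative to the rounding cutoff, i.e.\ $\Modm(k-1-A_i)$ in your notation. This joint equidistribution is precisely the content of the paper's \refL{Lu} and its Weyl-type extension \refL{Lux}, which adjoins the extra coordinate $\Modm\bigpar{\sum_j\floor{n(p_j-p_i)}-n}$ to the torus orbit and whose proof requires a rescaling $y_j\mapsto y_j/m$ plus a subsequence argument; it does not follow from plain Weyl equidistribution on $H$. Your proposal neither states nor proves this fact, and it locates the ``main obstacle'' elsewhere (in the wrap-around bookkeeping), so the derivation of \eqref{ta} is incomplete at its central point.

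A plausible repair inside your own framework: the map $H\to\bbT^{m-1}$, $(r_j)_j\mapsto(r_j-r_i)_{j\ne i}$, is a surjective homomorphism of compact groups whose kernel is the $m$-element subgroup $\set{(t,\dots,t):mt\in\bbZ}$, so conditionally on the relative phases the point of $H$ is uniform on an $m$-point fiber; one would then have to verify that $\Modm(k-1-A_i)$ restricts to a bijection from each fiber onto $\setm$. That would recover the $1/m$ and asymptotic independence in one stroke, but it is an argument you would still need to write out; as submitted, the proof of \eqref{ta} is not complete.
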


In other words, each $I_j\qi\in\setoi$ with
$\P(I_j\qi=1)=|p_i-p_j|$. 

\begin{remark}
If we do not order the states, then equation \eqref{tab} can equivalently be written as
\begin{equation}\label{tabb}
\ttq_i=
\frac{1}{m}\sum_{s=0}^{m-3}\sum_{k=2}^{m-s-1}(-1)^{s+k}
\binom{s+k-2}{s}e_s\bigpar{r_{1+}^{(i)},\dots,r_{m+}^{(i)}}
e_{k}\bigpar{r_{1-}^{(i)},\dots,r_{m-}^{(i)}},
\end{equation}
where $r_{j\pm}^{(i)}:=(p_i-p_j)_\pm$.
Since \eqref{tabb} is symmetric under permutations of $\ppm$, we may  
assume $\ppom$. 
In this case, $r\qi_{j+}=0$ for $j\le i$ and $r\qi_{j-}=0$ for $j\ge i$, and
it is easily seen that the sums in \eqref{tabb} and \eqref{tab} are equal.
\end{remark}

The proof of the theorem is given in \refS{Spf}. 
We give first several consequences of the main theorem in 
Sections \ref{Scor}--\ref{Srandom};
the proofs of these results are given in Sections \ref{SpfC}--\ref{Spfrandom}. 

For simplicity, we have here considered one state at a time.
It may happen that the Alabama paradox occurs for two (or more) states at
the same time, although this is less likely, see \refSS{SSseveral}.

\section{Further results}\label{Scor}

In the case of three states, \refT{TA} yields the following simple formula.

\begin{corollary}\label{C3}
Suppose that there are three states with relative sizes 
$p_{(1)}\ge p_{(2)}\ge p_{(3)}$ with
$p_{(1)}+p_{(2)}+p_{(3)}=1$,  
and assume that $p_{(1)},p_{(2)},p_{(3)}$ are \linQ.
Then only the smallest state can suffer from the Alabama paradox, and the
probability of this is
$\frac13(p_{(1)}-p_{(3)})(p_{(2)}-p_{(3)})$.
\end{corollary}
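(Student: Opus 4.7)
The plan is to apply Theorem~\ref{TA} with $m=3$ directly to each of the three states in turn, using the compact form \eqref{ta} rather than the explicit formula \eqref{tab}, since with only two Bernoulli variables per state the expectation $\E(\si--\si+-1)_+$ can be evaluated by inspection.

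First I would handle the largest state $(1)$: every other state is smaller, so $\si[(1)]+=I_2\qx{(1)}+I_3\qx{(1)}$ and $\si[(1)]-=0$, giving $(\si[(1)]--\si[(1)]+-1)_+=0$ deterministically, hence $\ttq_{(1)}=0$. Next, for the middle state $(2)$ we get $\si[(2)]-=I_1\qx{(2)}\sim\Be(p_{(1)}-p_{(2)})$ and $\si[(2)]+=I_3\qx{(2)}\sim\Be(p_{(2)}-p_{(3)})$; but then $\si[(2)]--\si[(2)]+-1\le 1-0-1=0$ pointwise, so again $\ttq_{(2)}=0$. This shows the first claim of the corollary, namely that only the smallest state can suffer.

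For the smallest state $(3)$ we have $\si[(3)]+=0$ and $\si[(3)]-=I_1\qx{(3)}+I_2\qx{(3)}$ with independent Bernoullis of parameters $p_{(1)}-p_{(3)}$ and $p_{(2)}-p_{(3)}$. The quantity $(I_1\qx{(3)}+I_2\qx{(3)}-1)_+$ equals $1$ exactly when both indicators are $1$ and equals $0$ otherwise, i.e.\ it coincides with the product $I_1\qx{(3)}I_2\qx{(3)}$. Taking expectations and using independence gives $\E(\si[(3)]--1)_+=(p_{(1)}-p_{(3)})(p_{(2)}-p_{(3)})$, and dividing by $m=3$ yields the stated formula.

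There is no real obstacle here: the proof is essentially a bookkeeping exercise, and the only thing worth noting is the elementary identity $(I+J-1)_+=IJ$ for $I,J\in\setoi$, which makes the expectation collapse to a simple product.
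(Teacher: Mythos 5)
Your proof is correct and is essentially the same as the paper's own argument (the paper's alternative proof via \eqref{ta}): both note that $S_i^--S_i^+-1>0$ requires at least two larger states, so only the smallest of three states can suffer, and then the probability collapses to $\tfrac13\P\bigl(I_1^{(3)}=I_2^{(3)}=1\bigr)=\tfrac13(p_{(1)}-p_{(3)})(p_{(2)}-p_{(3)})$ by independence. Your explicit case check for the two larger states and the identity $(I+J-1)_+=IJ$ for $\setoi$-valued variables are just a slightly more detailed rendering of the same computation.
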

The supremum of this probability over all distributions
$(p_{(1)},p_{(2)},p_{(3)})$ 
is $1/12$, which is never attained but is approached in the extreme case
when $p_{(3)}$ is very small and $p_{(1)}$ and $p_{(2)}$ both are close to $1/2$.

In general, the Alabama paradox can
affect any state except the two largest, but it is much more likely to
affect small states. 

\begin{remark}
Note however that Hamilton's method is unbiased. On the
average, state $i$ increases its number of seats by $p_i$ each time $n$ is
increased, so if it sometimes suffers from the Alabama paradox and its
representation decreases with frequency $q_i$, 
this must be compensated by a frequency $p_i+q_i$ of the times when the
number of seats increases.   
\end{remark}

\begin{corollary}\label{Cmin}
  In addition to the assumptions of \refT{TA}, assume that $\ppom$.
Then $\ttq_{(m)}\ge \ttq_{(m-1)}\ge\dots\ge \ttq_{(3)}\ge
 \ttq_{(2)}=\ttq_{(1)}=0$. 
Moreover, the largest 
probability is
\begin{equation}\label{cmin1}
  \ttq_{(m)}=\frac1m\prod_{j=1}^{m-1}\bigpar{1-(p_{(j)}-p_{(m)})}-p_{(m)}.
\end{equation}
We have, for any $i$, the inequalities
\begin{equation}\label{cmin2}
  \frac1m\Bigpar{e^{-1}-mp_{(i)}-\frac12\sum_j p_j^2}
\le \ttq_{(i)}
<
\frac1m\Bigpar{1-\frac{1}{m-1}}^{m-1}<\frac1m e^{-1}.
\end{equation}
\end{corollary}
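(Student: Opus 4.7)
My plan is to dispatch the four claims in turn: the ordering, the closed formula \eqref{cmin1}, the upper bound, and the lower bound.

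\textbf{Monotonicity.} The vanishing $\ttq_{(1)}=\ttq_{(2)}=0$ is immediate from \eqref{ta}: for $i=1$, the sum $S^-_{(1)}$ is empty, and for $i=2$ the variable $S^-_{(2)}$ takes values in $\{0,1\}$, so $S^-_{(2)}-S^+_{(2)}-1\le 0$ pointwise. For $\ttq_{(i)}\le\ttq_{(i+1)}$ I build a monotone coupling. For each $j<i$ take i.i.d.\ uniforms $\xi_j$ and set $I_j^{(i)}:=\ett{\xi_j\le p_{(j)}-p_{(i)}}$, $I_j^{(i+1)}:=\ett{\xi_j\le p_{(j)}-p_{(i+1)}}$; since $p_{(i+1)}\le p_{(i)}$ one has $I_j^{(i+1)}\ge I_j^{(i)}$. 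Symmetrically, for $j>i+1$ common uniforms give $I_j^{(i+1)}\le I_j^{(i)}$, and the extra index $j=i$ contributes a new nonnegative Bernoulli to $S^-_{(i+1)}$. Consequently $(S^-_{(i+1)}-S^+_{(i+1)})-(S^-_{(i)}-S^+_{(i)})\ge 0$ almost surely; since $x\mapsto(x-1)_+$ is nondecreasing, taking expectations yields $\ttq_{(i+1)}\ge\ttq_{(i)}$.

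\textbf{Formula and upper bound.} For $i=m$ the sum $S^+_{(m)}$ is empty, so $X:=S^-_{(m)}$ is a nonnegative integer and the identity $(X-1)_+=X-1+\ett{X=0}$ gives
\[
\ttq_{(m)}=\tfrac1m\bigl(\E X-1+\P(X=0)\bigr)=\tfrac1m\prod_{j<m}\bigl(1-(p_{(j)}-p_{(m)})\bigr)-p_{(m)},
\]
using $\E X=1-mp_{(m)}$. For the upper bound, monotonicity reduces matters to $\ttq_{(m)}$. Writing $d_j:=p_{(j)}-p_{(m)}$ and $s:=\sum_j d_j=1-mp_{(m)}\in(0,1)$, AM--GM gives $\prod(1-d_j)\le(1-s/(m-1))^{m-1}$, so $\ttq_{(m)}\le\tfrac1m g(s)$ with $g(s):=(1-s/(m-1))^{m-1}+s-1$. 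The derivative $g'(s)=1-(1-s/(m-1))^{m-2}\ge 0$ shows $g$ is nondecreasing on $[0,1]$, so $g(s)\le g(1)=(1-1/(m-1))^{m-1}$. Both inequalities are strict under the \linQ{} hypothesis (pairwise distinct $p_{(j)}$ blocks AM--GM equality, while $p_{(m)}>0$ gives $s<1$), and $(1-1/(m-1))^{m-1}<e^{-1}$ is the standard monotone bound.

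\textbf{Lower bound.} The identity $(X-1)_+=(X-1)+(1-X)_+$, valid for all real $X$, applied to $X=S^-_{(i)}-S^+_{(i)}$ with $\E X=1-mp_{(i)}$, recasts \eqref{ta} as
\[
\ttq_{(i)}=-p_{(i)}+\tfrac1m\E\bigl(1+S^+_{(i)}-S^-_{(i)}\bigr)_+.
\]
Dominating the positive part by $\ett{S^-_{(i)}=0}$ yields $\ttq_{(i)}\ge -p_{(i)}+\tfrac1m\prod_{j<i}(1-d_j)$ with $d_j:=p_{(j)}-p_{(i)}\in[0,1]$. A standard product-telescoping using the sharp two-term bound $0\le e^{-d_j}-(1-d_j)\le d_j^2/2$ gives
\[
\prod_{j<i}(1-d_j)\ge e^{-\sum_{j<i}d_j}-\tfrac12\sum_{j<i}d_j^2.
\]
Two elementary observations finish: $\sum_{j<i}d_j=\sum_{j<i}p_{(j)}-(i-1)p_{(i)}\le 1-ip_{(i)}\le 1$, so $e^{-\sum d_j}\ge e^{-1}$; and $\sum_{j<i}d_j^2\le\sum_{j<i}p_{(j)}^2\le\sum_j p_j^2$. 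Substituting delivers exactly $\ttq_{(i)}\ge\tfrac1m(e^{-1}-mp_{(i)}-\tfrac12\sum_j p_j^2)$. The main difficulty is this last step: the quadratic correction $\tfrac12\sum d_j^2$ and the crisp bound $\sum d_j\le 1$ are each indispensable to produce both the constant $e^{-1}$ and the error term in \eqref{cmin2}; coarser estimates lose one or the other.
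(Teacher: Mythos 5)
Your proof is correct and follows essentially the same route as the paper: the same coupling for the monotonicity, the identity $(X-1)_+=X-1+\ett{X=0}$ for \eqref{cmin1} (the paper obtains the same product formula by summing the elementary symmetric polynomials in \eqref{tab}), and for the lower bound the same chain $m\ttq_{(i)}\ge\P(S^-_{(i)}=0)-mp_{(i)}$ followed by $1-x\ge e^{-x}-\tfrac12x^2$ and the estimates $\sum_{j<i}(p_{(j)}-p_{(i)})\le1$, $(p_{(j)}-p_{(i)})^2\le p_{(j)}^2$. The only notable difference is the upper bound, where your AM--GM step together with the monotonicity of $g(s)=(1-s/(m-1))^{m-1}+s-1$ is a cleaner, fully rigorous version of the paper's informal extremal argument (decrease $p_{(m)}$ to $0$, then equalize the remaining sizes).
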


If $p_{(m)}\to0$ and all other $p_{(j)}\to1/(m-1)$, then \eqref{cmin1} shows that 
$\ttq_{(m)}\to\frac1m\bigpar{1-\frac{1}{m-1}}^{m-1}$, so this is, for a given
$m$,  the least upper bound on the probability of the Alabama paradox
for a specific state in the linearly independent case
(but only in that case, see \refSS{SSdep}), which 
for large $m$ approaches $1/me$.

\begin{corollary}\label{Ce}
  Under the assumptions of \refT{TA}, the expected number of states
  suffering from the Alabama paradox each time the number of seats is
  increased is less than $1/e$.
This bound is approached 
 if we let $m\to\infty$ and suppose that $m-o(m)$ of the states are
very small, with $p_i=o(1/m)$, and that the remaining states are medium-size
with $p_i=o(1)$.
\end{corollary}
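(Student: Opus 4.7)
The plan is quick given \refT{TA}. By linearity of expectation, the expected number of states suffering from the Alabama paradox when we increase the number of seats by one equals $\sum_{i=1}^{m} q_i$. For the upper bound I invoke the pointwise estimate from \refC{Cmin}: since $q_i < \frac{1}{m} e^{-1}$ for every $i$, summing over the $m$ states immediately gives $\sum_{i=1}^{m} q_i < e^{-1}$.

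The substantive part is showing that this bound is asymptotically sharp. I would construct an explicit family of distributions parametrised by $m$ and a vanishing parameter $\eps$: take $k = k(m)$ ``small'' states, each with $p_i \approx \eps/k$, and $m-k$ ``medium'' states, each with $p_j \approx (1-\eps)/(m-k)$, arranged so that $k/m \to 1$, $m - k \to \infty$, and $\eps \to 0$. After a generic infinitesimal perturbation to guarantee the \linQ{} hypothesis of \refT{TA}, each small state still satisfies $p_i = o(1/m)$ and each medium state still satisfies $p_j = o(1)$.

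For a small state $i$ I analyse \eqref{ta}: $m q_i = \E(\si- - \si+ - 1)_+$. The bound $\E \si+ \le k p_i = o(1)$ forces $\si+ \to 0$ in probability. The random variable $\si-$ receives a vanishing contribution from the other small states and a dominant contribution from the medium states, where each Bernoulli parameter $p_j - p_i$ tends to $0$ individually while these parameters sum to $1 - o(1)$; the classical Poisson convergence theorem then yields $\si- \dto \Po(1)$. Since $\E(\si-)^2 \le \E \si- + (\E \si-)^2 = O(1)$, the family $(\si- - \si+ - 1)_+$ is uniformly integrable, so passing to the limit gives
\begin{equation*}
m q_i \to \E(\Po(1) - 1)_+ = \E \Po(1) - 1 + \P(\Po(1) = 0) = e^{-1}.
\end{equation*}
Summing, the $k$ small states contribute $k \cdot e^{-1}/m \cdot (1 + o(1)) \to e^{-1}$ since $k/m \to 1$, while the $m - k$ medium states contribute at most $(m-k)/m \cdot e^{-1} = o(1)$ by \refC{Cmin}. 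Hence $\sum_i q_i \to e^{-1}$. The main technical point is the Poisson approximation of $\si-$ combined with the uniform integrability required to pass to the limit inside the expectation in \eqref{ta}; everything else is bookkeeping.
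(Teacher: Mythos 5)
Your proof is correct, and the two halves have different relations to the paper's argument. The upper bound is handled exactly as in the paper: sum the pointwise bound $q_i<e^{-1}/m$ from \refC{Cmin} over the $m$ states. For the sharpness claim, however, you take a genuinely different route. The paper simply invokes the lower bound in \eqref{cmin2}, $q_{(i)}\ge\frac1m\bigpar{e^{-1}-mp_{(i)}-\frac12\sum_j p_j^2}$, which for a small state (where $mp_i=o(1)$ and all $p_j=o(1)$, so $\sum_j p_j^2=o(1)$) immediately gives $q_i\sim e^{-1}/m$; summing over the $m-o(m)$ small states finishes the proof in one line. You instead return to \eqref{ta} and prove $mq_i\to e^{-1}$ directly via the law of small numbers: $\si+\pto0$, $\si-\dto\Po(1)$, uniform integrability from the second-moment bound, hence $\E\bigpar{\si--\si+-1}_+\to\E(\Po(1)-1)_+=e^{-1}$. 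This is in effect a re-derivation, in this special case, of the Poisson approximation the paper packages as \refC{CP} (which gives $|q_i-\hq_i|\le\frac1m\sum_j(p_j-p_i)^2$, here $o(1/m)$, with $\gl^+=o(1)$ and $\gl^-=1-o(1)$); your argument is self-contained but rebuilds machinery the paper already has on the shelf, and it loses the explicit error term that \eqref{cmin2} provides. Two minor points: you analyse only an explicit one-parameter family rather than the general class in the statement --- which suffices to show the bound is approached, and your estimates in fact depend only on $\max_i mp_i$ and $\max_j p_j$, so they cover the general class as well; and the uniformity over the $k$ small states of the $(1+o(1))$ factor, which you use when summing, should be stated explicitly for the same reason.
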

In this extremal case, the paradox is thus very common. It can be even more
common in the rational case, 
see Proposition \ref{P:highE}.
See also \refE{Efluid}, where we show that the probability of at least
one state suffering the Alabama paradox in this case converges to
$1-2/e\approx0.264$. 
We conjecture that this is the upper bound of the probability that at least
one state suffers the paradox (under the assumption of \refT{TA}); 
note that \refC{Ce} shows that the
probability is always less than $e\qw\approx0.368$.

The exact formula in \eqref{tab} is unwieldy when $m$ is large; it may then be
attractive to use 
\eqref{ta} with
a Poisson approximation of $S^\pm_i$:

\begin{corollary}
  \label{CP}
Under the assumptions of \refT{TA}, let (for a given $i$)
$\gl^+\=\sum_{j:p_j<p_i}(p_i-p_j)$ and $\gl^-\=\sum_{j:p_j>p_i}(p_j-p_i)$,
Let further $\hS^+\sim\Po(\gl^+)$ and $\hS^-\sim\Po(\gl^-)$ be
independent Poisson random variables, and define
\begin{align}\label{cp}
\hq_i&:=
\frac1m \E\bigpar{\hS^--\hS^+-1}_+
\\&\phantom:
=\frac1m\sum_{j\ge k+2} (j-k-1)\frac{(\gl^-)^j(\gl^+)^k}{j!\,k!}
e^{-\gl^--\gl^+}
\label{cp+}
\\&\phantom:
=\frac1m\sum_{j\le k} (k+1-j)\frac{(\gl^-)^j(\gl^+)^k}{j!\,k!}
e^{-\gl^--\gl^+}-p_i.
\label{cp-}
\end{align}
Then
\begin{equation}\label{cp2}
  |q_i-\hq_i|\le\frac1m\sum_{j=1}^m(p_j-p_i)^2.
\end{equation}
\end{corollary}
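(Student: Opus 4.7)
The plan is to establish \eqref{cp2} by a coupling argument, and then derive \eqref{cp+} and \eqref{cp-} from \eqref{cp} by algebraic rearrangement. By \refT{TA} and the definition of $\hq_i$, both sides of \eqref{cp2} have the form $\frac1m\E f(U,V)$ where $f(x,y)\=(x-y-1)_+$ and $(U,V)$ is either $(\si-,\si+)$ or $(\hS^-,\hS^+)$; in both cases the coordinates are independent and have matched means $\gl^-$ and $\gl^+$. The task is thus to bound the difference of expectations of the same test function under two related laws on $\bbZgeo^2$.

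The key observation is that $f$ is $1$-Lipschitz in each coordinate. If we couple $(\si-,\hS^-)$ and independently $(\si+,\hS^+)$, a standard telescoping argument yields
\begin{equation*}
|q_i-\hq_i|\le \frac1m\bigpar{\E|\si--\hS^-|+\E|\si+-\hS^+|}.
\end{equation*}
Now $\si\pm$ and $\hS^\pm$ are sums over the same index set of independent Bernoulli and Poisson variables, so it suffices to couple each summand pair $I_j\qi\sim\Be(p)$ with $\hat I_j\sim\Po(p)$ (here $p\=|p_i-p_j|$) independently across $j$ and then the triangle inequality gives $\E|\si\pm-\hS^\pm|\le\sum_{j}\E|I_j\qi-\hat I_j|$. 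I would use the monotone quantile coupling generated by a single uniform variable, for which a short direct calculation yields $\E|I_j\qi-\hat I_j|=2(p+e^{-p}-1)$. The auxiliary function $g(p)\=p^2-2(p+e^{-p}-1)$ satisfies $g(0)=g'(0)=0$ and $g''(p)=2(1-e^{-p})\ge0$, so $g\ge0$ on $[0,\infty)$; hence $\E|I_j\qi-\hat I_j|\le(p_i-p_j)^2$. Summing over the relevant $j$'s gives \eqref{cp2}.

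Finally, \eqref{cp+} is simply \eqref{cp} expanded as a double sum over the joint distribution of $(\hS^-,\hS^+)$, using that $(j-k-1)_+$ vanishes unless $j\ge k+2$. For \eqref{cp-} I would apply the identity $x_+=x+(-x)_+$ to write
\begin{equation*}
(\hS^--\hS^+-1)_+=(\hS^--\hS^+-1)+(\hS^+-\hS^-+1)_+,
\end{equation*}
take expectations, and observe that $\E(\hS^--\hS^+-1)=\gl^--\gl^+-1=\sum_j(p_j-p_i)-1=-mp_i$; expanding the remaining expectation as a double sum in the style of \eqref{cp+} then yields \eqref{cp-}. The one step that requires real work is the Bernoulli--Poisson coupling estimate; the other steps are routine.
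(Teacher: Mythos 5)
Your proposal is correct and follows essentially the same route as the paper: the monotone Bernoulli--Poisson coupling with $\E|I-Y|=2(e^{-r}-1+r)\le r^2$, the triangle inequality via the Lipschitz property of $x\mapsto(x-1)_+$, and the identity $x_+=x+(-x)_+$ together with $\E(\hS^--\hS^+-1)=-mp_i$ to pass from \eqref{cp+} to \eqref{cp-}. The only difference is cosmetic: you verify the coupling inequality explicitly (via the convexity of $p^2-2(p+e^{-p}-1)$), which the paper simply cites as well known.
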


\section{Average probability of the Alabama paradox}\label{Srandom}

We have so far considered the probability of the Alabama paradox for given
relative population sizes $p_1,\dots,p_m$. Let us now instead fix $m\ge3$ and
consider the average probability over all population distributions. In other
words, in this section we
let $(p_1,\dots,p_m)$ be random and uniformly distributed 
over the simplex 
$\gD^m\=\set{(\ppm)\in\oi^m:\sum_ip_i=1}$, and take the expectation.
Note that then $\ppm$ are \linQ{} a.s., so we may in the sequel
assume that \refT{TA} and its corollaries apply.

As above,  $\ppom$ denotes the population vector $\ppm$
rearranged in increasing order;
note that $(p\xx1,\dots,p\xx m)$ is uniformly distributed over the subset
$\ppom$ of the simplex $\gD^m$.
Similarly,  $q\xx1\le\dots\le q\xx m$
are the corresponding probabilities of the Alabama paradox, which by
\refC{Cmin} are  $q_1,\dots,q_m$ rearranged in increasing order.
In particular, 
$q\xx m$ is the probability of the Alabama paradox for the smallest state.
Note that $q\xx{1}=q\xx 2=0$ but a.s.\ $q\xx k>0$ for $k\ge 3$.
Since $q_i$ and $q\xx i$ are functions of $\ppm$, they too now 
are random variables, and we may ask for their expectations, or other
properties of their distributions.
We use the notations $\dto$ and $\pto$ for convergence in distribution and
probability, respectively, always as \mtoo.

In the case $m=3$, the average of the formula in \refC{C3} over all
$(p\xx1,p\xx2,p\xx3)$ is easily found by integration, for 
example by the substitution $p\xx1=1-p\xx3-p\xx2$ and integrating over
$(p\xx2,p\xx3)$ 
with the conditions $0<p\xx3<\frac13$ and $p\xx3<p\xx2<(1-p\xx3)/2$; a
calculation 
yields the probability $\E q\xx 3$ for the Alabama 
paradox for three states of random sizes as $1/36$.

We extend this to larger $m$. First we consider only the smallest state,
which is most likely to experience the paradox. Recall the notation 
$m\rise k:=m(m+1)\dots (m+k-1)$ 
for the rising factorial.

\begin{theorem} \label{T:expected}
With uniformly random relative population sizes, 
the expected probability 
$\E q\xx m$ that the smallest state among $m$ states will suffer from the
Alabama paradox is 
\begin{equation}
  \label{texp}
\E q\xx{m} =
\frac1m\sum_{k=0}^{m-1} (-1)^{k}\frac{\binom{m-1}{k}}{m\rise k} 
-\frac{1}{m^2}
=
\frac1m\sum_{k=2}^{m-1} (-1)^{k}\frac{\binom{m-1}{k}}{m\rise k} 
\end{equation}
Hence, as \mtoo, 
\begin{equation}\label{texpoo}
\E q\xx{m} 
= \frac{e^{-1}}{m}-\frac{1}{m^2}+O\left(\frac{1}{m^3}\right).  
\end{equation}
Furthermore, 
$mq\xx m\pto e\qw$
as \mtoo.
\end{theorem}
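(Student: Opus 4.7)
The plan is to deduce all three parts of \refT{T:expected} from the closed form for $q_{(m)}$ given in \refC{Cmin}. The starting observation is that the factor corresponding to the minimizing index in \eqref{cmin1} equals $1$, so one may harmlessly extend the product over all $m$ indices to write
\begin{equation*}
q_{(m)} = \frac{1}{m}\prod_{j=1}^{m}\bigpar{1 - p_j + p_{(m)}} - p_{(m)},
\end{equation*}
which is now symmetric in the original indexing.

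For the exact formula \eqref{texp}, I would condition on $T:=p_{(m)}$. A short computation gives $T$ the density $m(m-1)(1-mx)^{m-2}$ on $[0,1/m]$, and by exchangeability over the identity of the minimizer, given $T=x$ the shifts $y_j:=p_j-x$ on the remaining $m-1$ coordinates are uniform on the simplex of total mass $1-mx$. Rescaling by $1-mx$, expanding $\prod(1-y_j)=\sum_{k}(-1)^k(1-mx)^k e_k$, and invoking the Dirichlet moment identity $\E w_{j_1}\cdots w_{j_k}=(m-2)!/(m-2+k)!$ gives the conditional expectation as $\sum_k(-1)^k\binom{m-1}{k}(1-mx)^k/(m-1)\rise k$. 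Integrating against the density of $T$ via $\int_0^{1/m}(1-mx)^N\dd x=1/(m(N+1))$ and using the simplification $(m-1)/(m-1)\rise{k+1}=1/m\rise k$ yields the first equality of \eqref{texp}. The second equality follows because the $k=0$ and $k=1$ terms contribute $1-(m-1)/m=1/m$, which after dividing by $m$ exactly cancels the $-1/m^2$.

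For the asymptotic \eqref{texpoo}, I would write
\begin{equation*}
\binom{m-1}{k}/m\rise k=\frac{1}{k!}\prod_{j=1}^k\frac{m-j}{m+j-1}=\frac{1}{k!}\Bigpar{1-\frac{k^2}{m}+O(k^4/m^2)},
\end{equation*}
the second equality being a standard Taylor expansion of $\log$, uniformly valid say for $k\le\sqrt m$. Substituting into the sum, the crucial arithmetic is the two generating-function identities $\sum_{k\ge 0}(-1)^k/k!=e^{-1}$ and $\sum_{k\ge 0}(-1)^k k^2/k!=0$, the latter because $(z+z^2)e^z$ vanishes at $z=-1$; the second identity kills the $O(1/m)$ correction inside the sum, yielding $\sum=e^{-1}+O(1/m^2)$, whence dividing by $m$ and subtracting $1/m^2$ gives \eqref{texpoo}. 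The tail $k>\sqrt m$ is bounded termwise by $1/k!$ and is exponentially small.

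For the convergence in probability, use
\begin{equation*}
m q_{(m)}=\prod_{j=1}^{m}\bigpar{1-p_j+p_{(m)}}-m p_{(m)}.
\end{equation*}
By Markov, $mp_{(m)}\pto 0$ since $\E p_{(m)}=1/m^2$. For the product, take logs: using $|\log(1-t)+t|\le t^2$ for $|t|\le 1/2$ (valid because $\max_j p_j\pto 0$),
\begin{equation*}
\sum_j\log\bigpar{1-p_j+p_{(m)}}=-\bigpar{1-m p_{(m)}}+O\biggpar{\sum_j(p_j-p_{(m)})^2},
\end{equation*}
and the error is dominated by $\sum p_j^2$, whose expectation equals $2/(m+1)\to 0$. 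Both correction terms vanish in probability, so the log tends to $-1$, the product to $e^{-1}$, and thus $mq_{(m)}\pto e^{-1}$. The main obstacle is the somewhat fiddly algebra in Part 1, especially the cascading simplifications through the density of $p_{(m)}$, Dirichlet moments at level $m-1$, and the rising-factorial identity that bring the sum into the clean form $\binom{m-1}{k}/m\rise k$; once in that form, Part 2 rests on the non-obvious arithmetic cancellation $\sum(-1)^k k^2/k!=0$, which promotes the in-sum error from $O(1/m)$ to $O(1/m^2)$ and delivers the $-1/m^2$ correction cleanly.
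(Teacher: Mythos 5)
Your proof is correct and, for the exact formula \eqref{texp}, follows essentially the same route as the paper: both start from \eqref{cmin1}, use that conditionally on the value of the minimum the shifted coordinates $p_j-p\xx m$ are uniform on a simplex of total mass $1-mp\xx m$, expand the product, and evaluate the resulting moments (the paper integrates directly over that simplex via the Dirichlet integral \eqref{dir} rather than first extracting the density of $p\xx m$ and normalizing, but the computation is the same, and your rising-factorial bookkeeping checks out). You diverge in two places. First, you supply the details of the asymptotic expansion \eqref{texpoo}, which the paper dismisses as easy; your observation that $\sum_{k\ge0}(-1)^kk^2/k!=0$ is exactly the cancellation needed to push the in-sum error from $O(1/m)$ to $O(1/m^2)$ and hence get the clean $-1/m^2$ term. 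Second, for $mq\xx m\pto e\qw$ the paper argues more economically: since $q\xx m\le e\qw/m$ a.s.\ by \refC{Cmin}, the convergence $m\E q\xx m\to e\qw$ already gives $\E|e\qw-mq\xx m|\to0$ and hence convergence in probability; your direct law-of-large-numbers argument on $\log\prod_j(1-p_j+p\xx m)$ is also valid but needs the extra (easy) checks you note, namely $\max_jp_j\pto0$ and $\E\sum_jp_j^2=2/(m+1)\to0$. Both approaches buy the same conclusion; the paper's is shorter because it recycles the uniform bound from \refC{Cmin}, while yours is self-contained given only the product formula.
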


Hence, for large $m$, the expected value of the largest probability $q\xx m$ 
is asymptotically equal to the upper bound $e\qw/m$ given in \refC{Cmin}, 
and, furthermore,  $q\xx m$ is close to $e\qw/m$ for most $\ppm$.

\begin{remark}
  The sum in \eqref{texp} is a hypergeometric sum and the result can be
  written
\begin{equation}
\E q\xx{m} =
\frac1m{}_1F_1(1-m;m;1)
-\frac{1}{m^2}
\end{equation}
with a confluent hypergeometric function ${}_1F_1$ (in this case a polynomial).
\end{remark}

For small $m$ we have the following table.
\smallskip
\begin{center}
\begin{tabular}{c|c|c|c|c|c|c|c|c|c|c}
$m$& 2&  3 & 4 & 5 & 6 & 7 & 8 & 9 \\
\hline\hbox{\vrule width0pt height 12pt}
$\E q\xx m$
&$0$&$\frac1{36}$&${\frac {17}{480}}$&${\frac {61}{1680}}$&
${\frac {907}{25920}}$&${\frac {153709}{4656960}}$&${\frac{855383}{27675648}}$
&${\frac {134964353}{4670265600}}$
\end{tabular}  
\end{center}
\medskip

For a fixed (or random) state, we obtain a more complicated formula.

\begin{theorem}
  \label{TE}
With uniformly random relative population sizes
the average probability $\E q_m$  that a given state among $m$ states
will suffer from the Alabama paradox is
\begin{multline*}
\E q_m=
\frac{1}{m}\sum_{s=0}^{m-3}\sum_{k=2}^{m-s-1}
\sum_{i=0}^{s}  
\sum_{j=0}^{s-i}
(-1)^{k+i+j}\binom{s+k-2}{s} 
\\ \times
\frac{(m-1)!^2}{k!\,i!\,(s-i-j)!\,
(m-1-k-s)!\,(m-1+k+s)!} 
{(i+k+1)^{-j-1}}.
\end{multline*}
\end{theorem}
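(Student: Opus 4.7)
The plan is to apply \refT{TA}---specifically formula~\eqref{tabb} with $i=m$---and integrate term by term against the uniform distribution on $\gD^m$. First I would expand the elementary symmetric polynomials as sums over subsets and use the full exchangeability of $(p_1,\dots,p_{m-1})$ given $p_m$; the identity $(p_m-p_j)_+(p_j-p_m)_+=0$ automatically forces disjointness of the two chosen subsets, so
\begin{equation*}
\E\bigsqpar{e_s(r_{1+}^{(m)},\dots,r_{m+}^{(m)})\,e_k(r_{1-}^{(m)},\dots,r_{m-}^{(m)})} = \binom{m-1}{s}\binom{m-1-s}{k}\,I(s,k,m),
\end{equation*}
where
\begin{equation*}
I(s,k,m) := \E\biggsqpar{\prod_{j=1}^s(p_m-p_j)_+\prod_{j=s+1}^{s+k}(p_j-p_m)_+}.
\end{equation*}

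Next I would invoke the gamma--Dirichlet decomposition: if $X_1,\dots,X_m$ are i.i.d.\ $\Exp(1)$ and $S=\sum_jX_j\sim\gG(m)$, then $(X_1/S,\dots,X_m/S)$ is uniformly distributed on $\gD^m$ and independent of $S$. Since $(X_m-X_j)_\pm=S(p_m-p_j)_\pm$, taking expectations and dividing by $\E[S^{s+k}]=(m+s+k-1)!/(m-1)!$ yields
\begin{equation*}
I(s,k,m) = \frac{(m-1)!}{(m+s+k-1)!}\,J(s,k),\quad J(s,k):=\E\biggsqpar{\prod_{j=1}^s(X_m-X_j)_+\prod_{j=s+1}^{s+k}(X_j-X_m)_+}.
\end{equation*}
This is the conceptual heart of the argument: it replaces a high-dimensional integral over the simplex with a fixed integral against $s+k+1$ independent exponentials.

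To evaluate $J(s,k)$ I would condition on $X_m=t$. A direct calculation gives $\E[(t-X)_+]=t-1+e^{-t}$, while the memoryless property gives $\E[(X-t)_+]=e^{-t}$, whence
\begin{equation*}
J(s,k) = \int_0^\infty e^{-(k+1)t}(t-1+e^{-t})^s\dd t.
\end{equation*}
Applying the binomial theorem twice---first to $((t-1)+e^{-t})^s$ to pull out the factors $e^{-it}$, then to $(t-1)^{s-i}$---and integrating monomial by monomial via $\int_0^\infty t^n e^{-ct}\dd t=n!/c^{n+1}$, followed by the reindexing $j\mapsto s-i-j$ to bring the exponent into the form $(i+k+1)^{-j-1}$, gives
\begin{equation*}
J(s,k) = s!\sum_{i=0}^s\sum_{j=0}^{s-i}\frac{(-1)^{s-i-j}}{i!\,(s-i-j)!}\,(i+k+1)^{-j-1}.
\end{equation*}

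Finally I would substitute everything back into \eqref{tabb}, use $\binom{m-1}{s}\binom{m-1-s}{k}=(m-1)!/(s!\,k!\,(m-1-s-k)!)$ to cancel the $s!$ coming from $J(s,k)$, and combine signs via $(-1)^{s+k}(-1)^{s-i-j}=(-1)^{k+i+j}$; this produces exactly the four-fold sum in the statement. The main obstacle I anticipate is this final bookkeeping step---tracking the factorials, binomial coefficients and signs, especially the reindexing required to match the exponent $(i+k+1)^{-j-1}$ appearing in the stated formula.
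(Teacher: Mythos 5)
Your proposal is correct and follows essentially the same route as the paper's proof: take the expectation of \eqref{tabb} with $i=m$, use exchangeability to reduce the product $e_s\cdot e_k$ to a single expectation times the combinatorial factor $\binom{m-1}{s}\binom{m-1-s}{k}=\binom{m-1}{s,k,m-1-s-k}$, pass to i.i.d.\ exponentials via the Dirichlet--Gamma representation (with independence of the normalized vector and the sum, dividing by $\E S^{s+k}$), condition on $T_m=t$ using $\E(t-T)_+=e^{-t}-1+t$ and $\E(T-t)_+=e^{-t}$, and expand $\int_0^\infty(e^{-t}-1+t)^s e^{-(k+1)t}\dd t$ termwise. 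The final bookkeeping you flag does work out exactly as you describe, reproducing the stated four-fold sum.
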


For small $m$ we have the following table.
\smallskip
\begin{center}
\begin{tabular}{c|c|c|c|c|c|c|c|c|c|c}
$m$& 2&  3 & 4 & 5 & 6 & 7 & 8 \\
\hline\hbox{\vrule width0pt height 12pt}
$\E q_m$
&$0$
&${\frac {1}{108}}$&${\frac {17}{1440}}$&${\frac {523}{43200}}$&${\frac {
2287039}{195955200}}$&${\frac {100704757}{9144576000}}$&${\frac {
404675341849}{39230231040000}}$
\end{tabular}  
\end{center}
\medskip

The average probability $\E q_m$ in \refT{TE} is, of course, always smaller
than the average of the largest probability $\E q\xx m$ in \refT{T:expected}.
It is somewhat surprising that it is not much smaller, the ratio is close to
$1/3$, as is shown for some small $m$ by the following table (with rounded values
 computed by Maple):

\smallskip
\begin{center}
\begin{tabular}{c|c|c|c|c|c|c|c|c|c|c}
$m$&   3 & 10 & 20 & 30 & 50 &100 \\
\hline
$\E q_m/\E q\xx m$
&$0.33333$&$ 0.33392$&$0.33441$&$ 0.33457$&$0.33474 $&$0.33487$
\end{tabular}  
\end{center}

Indeed, this persists for large $m$, and $\E q_m$ is really of order $1/m$,
since \refT{TE} implies the following asymptotic formula by dominated
convergence: 

\begin{corollary}
  \label{CE}
With uniformly random relative population sizes, 
the expected number of occurrences of the Alabama paradox among all $m$
states is $m\E q_m$ which 
as \mtoo{} has the limit 
\begin{equation*}
 m\E q_m\to 
b\=\sum_{s=0}^{m-3}\sum_{k=2}^{m-s-1}
\sum_{i=0}^{s}  
\sum_{j=0}^{s-i}
(-1)^{k+i+j}\binom{s+k-2}{s} 
\frac{(i+k+1)^{-j-1}}{k!\,i!\,(s-i-j)!} .
\end{equation*}
\end{corollary}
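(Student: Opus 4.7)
The plan is to pass to the limit term-by-term in the formula for $\E q_m$ from \refT{TE}, justifying the interchange by dominated convergence. Multiplying the identity in \refT{TE} by $m$ and grouping the $m$-dependent pieces gives
\begin{equation*}
m\E q_m = \sum_{s=0}^{m-3}\sum_{k=2}^{m-s-1}\sum_{i=0}^{s}\sum_{j=0}^{s-i} (-1)^{k+i+j} \binom{s+k-2}{s} \frac{(i+k+1)^{-j-1}}{k!\,i!\,(s-i-j)!}\, R_{s,k}(m),
\end{equation*}
where all the $m$-dependence is collected in
\begin{equation*}
R_{s,k}(m)\=\frac{(m-1)!^2}{(m-1-k-s)!\,(m-1+k+s)!} = \prod_{r=1}^{k+s}\frac{m-r}{m+r-1}.
\end{equation*}

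The summation ranges force $k+s \le m-1$, so each factor $(m-r)/(m+r-1)$ lies in $[0,1]$. Hence $0 \le R_{s,k}(m) \le 1$ for every admissible $m$, and $R_{s,k}(m) \to 1$ as \mtoo{} for each fixed $(s,k)$. Consequently every summand converges pointwise to the corresponding summand of $b$, and each is bounded, uniformly in $m$, by
\begin{equation*}
M_{s,k,i,j}\=\binom{s+k-2}{s}\frac{(i+k+1)^{-j-1}}{k!\,i!\,(s-i-j)!}.
\end{equation*}

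To invoke dominated convergence it suffices to check that $\sum M_{s,k,i,j} < \infty$. Summing over $j$ first via the substitution $p=s-i-j$ yields
\begin{equation*}
\sum_{j=0}^{s-i}\frac{(i+k+1)^{-j-1}}{(s-i-j)!} = \frac{1}{(i+k+1)^{s-i+1}}\sum_{p=0}^{s-i}\frac{(i+k+1)^p}{p!} \le \frac{e^{i+k+1}}{(i+k+1)^{s-i+1}},
\end{equation*}
and since $k \ge 2$ implies $i+k+1 \ge 3$, the resulting geometric decay in $s$ together with the factorial suppression from $(k!\,i!)^{-1}$ renders the remaining triple sum over $(s,k,i)$ finite. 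The main obstacle is precisely this absolute-summability bookkeeping; once it is in hand, dominated convergence applied to the quadruple sum yields $m\E q_m \to b$.
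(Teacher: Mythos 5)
Your proposal is correct and follows exactly the route the paper intends: \refC{CE} is derived from \refT{TE} ``by dominated convergence,'' and your isolation of the $m$-dependence as $R_{s,k}(m)=\prod_{r=1}^{k+s}\frac{m-r}{m+r-1}\in[0,1]$ with $R_{s,k}(m)\to1$, together with the summable dominating bound $M_{s,k,i,j}$, supplies precisely the verification the paper omits. The only cosmetic remark is that in the final summability check the factor $\binom{s+k-2}{s}\le 2^{s+k}$ should also be absorbed (it is, since $2^s\cdot(i+k+1)^{-(s-i)}\cdot e^i/i!$ still sums geometrically in $s$ and $2^k e^k/k!$ sums in $k$), but this is bookkeeping of the kind you already indicated, not a gap.
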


Thus, $\E q_m\sim b/m$. 
We do not know any better closed form for $b$, but numerically we obtain
from Maple 
$b\approx 0.12324$ 
and thus, using \refT{T:expected},
 $\E q_m/\E q\xx m\to be\approx 0.33501$, 
in accordance with the table
above. (If Maple is correct, the limit is not exactly $1/3$, but quite
close.) 

The formula for $b$ in \refC{CE} as an alternating quadruple sum is not very
illuminating, and it is not even easy to see that $b>0$ from it, but that at
least follows from the alternative representation in the next theorem, which
adds more information on the asymptotic relation between size and
probability for the Alabama paradox for random populations.

\begin{theorem}
  \label{Too}
Define, for any $\gl^-,\gl^+\ge0$,
with $\hS^\pm\sim\Po(\gl^\pm)$ independent,
similarly to \eqref{cp}--\eqref{cp-},
\begin{align}\label{xcp}
\Phi(\gl^-,\gl^+)&:=
 \E\bigpar{\hS^--\hS^+-1}_+
\\&\phantom:
=\sum_{j\ge k+2} (j-k-1)\frac{(\gl^-)^j(\gl^+)^k}{j!\,k!}
e^{-\gl^--\gl^+}
\label{xcp+}
\\&\phantom:
=\sum_{j\le k} (k+1-j)\frac{(\gl^-)^j(\gl^+)^k}{j!\,k!}
e^{-\gl^--\gl^+}+\gl^--\gl^+-1,
\label{xcp-}
\end{align}
and $\Psi(x)\=\Phi\bigpar{e^{-x},e^{-x}-1+x}$.
Then, as \mtoo,
with random population sizes,
\begin{equation}\label{too}
  (mp_m,mq_m)\dto\bigpar{T,\Psi(T)},
\end{equation}
where $T\sim\Exp(1)$.
In particular,
\begin{equation}\label{tooe}
  \begin{split}
  m\E q_m&\to b=\E \Psi(T)
\\&
\quad=
\sum_{j\ge k+2} (j-k-1)\intoo\frac{e^{-jx}(e^{-x}-1+x)^k}{j!\,k!}
e^{-2e^{-x}-2x+1}\dd x.	
  \end{split}
\end{equation}
\end{theorem}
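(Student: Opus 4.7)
The plan is to use the standard representation of the uniform distribution on the simplex via i.i.d.\ exponentials, combined with the Poisson approximation from \refC{CP}. Represent $(p_1,\dots,p_m)=(E_1,\dots,E_m)/S_m$ where $E_1,\dots,E_m$ are i.i.d.\ $\Exp(1)$ and $S_m\=\sum_{i=1}^m E_i$. Since $S_m/m\asto 1$, setting $T_m\=mp_m=mE_m/S_m$ yields $T_m\asto E_m$ and hence $T_m\dto T\sim\Exp(1)$, handling the first coordinate in \eqref{too}.

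For the second coordinate, first replace $q_m$ by the Poisson approximant $\hq_m$ of \refC{CP}: the bound \eqref{cp2} gives
\[
m\abs{q_m-\hq_m}\le\sum_{j=1}^m(p_j-p_m)^2=S_m^{-2}\sum_{j=1}^m(E_j-E_m)^2=\Op(1/m)\to 0,
\]
so it suffices to prove $m\hq_m=\Phi(\lambda^-,\lambda^+)\dto\Psi(T)$ jointly with $T_m\dto T$. Rewrite the parameters as $\lambda^+=S_m^{-1}\sum_{j\ne m}(E_m-E_j)_+$ and $\lambda^-=S_m^{-1}\sum_{j\ne m}(E_j-E_m)_+$; applying the LLN conditionally on $E_m$ together with the elementary identities $\E(x-E)_+=e^{-x}-1+x$ and $\E(E-x)_+=e^{-x}$ for $E\sim\Exp(1)$ then gives
\[
(T_m,\lambda^-,\lambda^+)\pto(T,e^{-T},e^{-T}-1+T).
\]
Since $\Phi$ in \eqref{xcp} is continuous on $\ooo^2$ (by dominated convergence, as $(\hS^--\hS^+-1)_+\le\hS^-$ and Poisson probabilities depend continuously on the parameter), the continuous-mapping theorem yields $\Phi(\lambda^-,\lambda^+)\dto\Psi(T)$; combined with the error estimate above this proves \eqref{too}.

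For \eqref{tooe}, \refC{Cmin} provides the deterministic uniform bound $mq_m<e^{-1}$; this boundedness upgrades distributional convergence to $m\E q_m\to\E\Psi(T)=\intoo\Psi(x)e^{-x}\dd x$. Substituting the Poisson series \eqref{xcp+} for $\Psi(x)=\Phi(e^{-x},e^{-x}-1+x)$ and merging the exponential factors ($e^{-e^{-x}-(e^{-x}-1+x)}\cdot e^{-x}=e^{-2e^{-x}-2x+1}$) gives exactly the integral in \eqref{tooe}. The main obstacle is the joint convergence $(T_m,\lambda^-,\lambda^+)\pto(T,e^{-T},e^{-T}-1+T)$: the sums defining $\lambda^\pm$ couple $E_m$ with every other $E_j$ and are normalised by the random quantity $S_m$, so one must carefully justify that, conditionally on $E_m$, the LLN applies to the remaining exponentials while the normaliser $S_m/m\to1$ contributes only an asymptotically trivial factor. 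The value of the exponential representation here is precisely that it replaces the discontinuous indicators $\bigett{p_j<p_m}$ by the continuous functionals $(E_m-E_j)_\pm$, which makes the continuous-mapping argument go through without further regularisation.
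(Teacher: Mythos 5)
Your proposal is correct and follows essentially the same route as the paper: the exponential representation $p_i=T_i/S_m$, the conditional law of large numbers giving $\gl^\pm$ the limits $e^{-T}$ and $e^{-T}-1+T$, the Poisson approximation of \refC{CP} with its $O(1/m)$ error bound, continuity of $\Phi$, and bounded convergence via the bound $mq_m\le e^{-1}$ from \refC{Cmin}. The only cosmetic differences are that the paper fixes the state index (working with state $1$, so the conditioning variable does not move with $m$ and all convergences hold almost surely, whereas your ``$T_m\asto E_m$'' with a target changing in $m$ should be read as a Slutsky-type approximation or reduced to state $1$ by exchangeability), and that the paper proves continuity of $\Phi$ as an explicit Lipschitz bound via a Poisson coupling rather than by dominated convergence.
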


By \refT{Too} and its proof, 
we can further say that, 
assuming random, uniformly distributed  populations, 
a state with $p_i=x/m$
has probability $q_i\approx \Psi(x)/m$ of suffering from the Alabama
paradox.
Note that the extreme case in
\refT{T:expected} can be seen (informally) as the limiting case $x=0$,
since we have $mp\xx m\pto0$ and $mq\xx m\to e^{-1}$, 
and indeed $\Psi(0)=\Phi(1,0)=e^{-1}$, e.g.\ by \eqref{xcp-}.

We plot the function $\Psi$ in \refF{FPsi}.

\epsfysize=8cm
\begin{figure}
 \begin{center} 
 \epsfbox{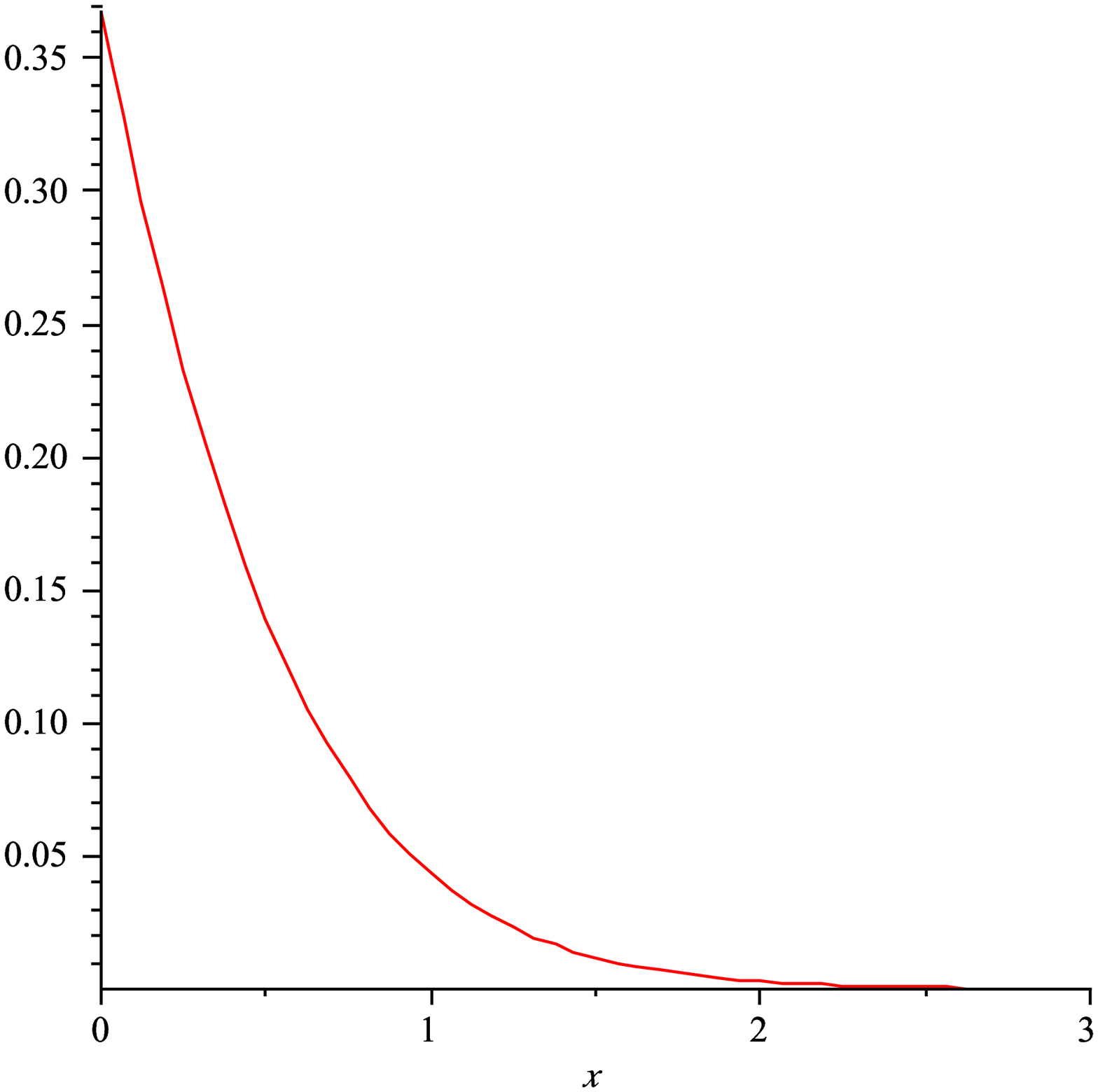}
\end{center}
\caption{The function $\Psi$ in \refT{Too}.}\label{FPsi}
\end{figure}

\section{Proof of \refT{TA}}\label{Spf}
We analyse the process of successive distributions of seats as follows.

Think of the different states as $m$ runners on a circular track, with state
$i$ running at constant speed $p_i$ (laps/time unit).
At time $n$, state $i$ has run a distance $np_i$, and thus
$\floor{np_i}$ full laps, so the number of
seats that it gets is the number of completed laps, plus an additional
seat for each of the states that have come furthest on the next lap; the
number of 
these additional seats is chosen such that the total number of seats is $n$.

We reformulate this by moving the finishing line; we mark its position by a
flag and count laps as runners pass the flag. 
We place the flag by the runner that got the last additional seat, \ie{} the
state with the smallest remainder that is rounded up. 
Then the number of seats a state gets equals its number of
laps, for every state.
(The assumption that $\ppm$ are \linQ{} implies that ties cannot occur, so
we do not have to worry at all about ties in this proof.) 

Let us increase the total number of seats from $n$ to $n+1$ in two steps.
We first increase time from $n$ to $n+1$ continuously, letting the runners
run, but at the same time we also move the flag, by letting it be carried by
a runner, so that the total number of laps stays at $n$.
This means that if the runner carrying the flag overtakes another, slower,
runner, then the flag is passed to the slower runner and both runners keep
the same numbers of laps. On the other hand, if the runner carrying the flag
is overtaken by a faster runner, then the flag is passed to the faster
runner, who gets one lap more, while the former flag-holder loses one lap.
(Other overtakings do not affect the flag, nor the number of laps for
anyone.)

Finally, at time $n+1$ we increase the total number of seats by one; this
means that the runner carrying the flag throws it to the next runner behind
him (her), who gains another lap.

We count positions, at any given time, relative to the flag
and say that
position 0 is the runner carrying the flag,
positions $-1,-2,\dots$ are the runners behind the flag-carrying runner,
and positions $1,2,3,\dots$ the runners in front of him/her.
(Since the track is circular, position $k$ and position $k-m$ are the same,
but that does not matter as long as we take a little care.)
It is easy to see that when one runner overtakes another, their positions
(which differ by 1) are exchanged, while all other positions remain the
same; this hold also if one of them carries the flag.

Consider a specific runner, say runner $i$.
The position of $i$  increases by 1 each time $i$ overtakes
someone else, and it decreases by 1 each time $i$ is overtaken. 
Furthermore, it increases by 1 at the final step when the flag is thrown.
Thus, if $S^+$ is the number of runners overtaken by $i$, and $S^-$ the number
of runners overtaking $i$, during the interval $[n,n+1]$, then the position
is increased by $S^+-S^-+1$.
Since the number of laps is changed only when the position changes between 0
and $-1$, we see that:
\begin{romenumerate}
\item[(+)] 
State $i$ gains a seat if $S^+-S^-+1>0$ and runner $i$ has at time $n$
  one of the positions $-1, -2, \dots,-(S^+-S^-+1)$.
\item[($-$)] 
State $i$ loses a seat if $S^+-S^-+1<0$ and runner $i$ has at time $n$
  one of the positions $0, 1, \dots,|S^+-S^-+1|-1$.
\end{romenumerate}
Case $(-)$ is thus when the Alabama paradox occurs for state $i$.
Let $L$
be the position of  runner $i$  relative to the flag at time $n$,
normalized to have $L\in\set{0,\dots,m-1}$.
Then the Alabama paradox occurs if and only if
\begin{equation}\label{ala}
  S^+-S^-+1<-L.
\end{equation}

Let the indicator $\iij$ be $1$ if $i$ overtakes $j$ during $[n,n+1]$, and 0
otherwise; similarly, let $\jij$ be 1 if $i$ is overtaken by $j$ and 0
otherwise.
Then $S^+=\sum_j \iij$ and $S^-=\sum_j \jij$.
(Note that no runner can overtake another more than once during $[n,n+1]$.)
We let $\fract x\= x-\floor x\in[0,1)$ denote the fractional part of a real
number $x$. 
(We will also use $\{\}$ to denote sets;  the meaning should be clear from
the context.) 
Then
\begin{align}
\iij=1 \iff p_i&>p_j \text{ and } 0<\fract{np_j-np_i} < p_i-p_j,
\label{i+}
\\
\jij=1 \iff p_i&<p_j \text{ and } 1-(p_j-p_i)<\fract{np_j-np_i} < 1.
\label{i-}
\end{align}
We calculate the probability of \eqref{ala} by finding the asymptotic joint
distribution of $L$ and
the fractional parts $\fract{np_j-np_i}$, $j\neq i$, 
where again we choose $n$ uniformly at
random with $1\le n\le N$, and then let \Ntoo.
By the formulas above, this gives the  asymptotic joint
distribution of $S^+$, $S^-$ and $L$, and thus
the (asymptotic) probability of \eqref{ala}.

We say that an infinite sequence $(v_n)_{n\ge1}\in\oix^{m-1}\times\setm$ is
\emph{uniformly distributed} if the empirical distributions
$N\qw\sum_{n=1}^N\gd_{v_n}$
converge to the uniform distribution as \Ntoo, where $\gd_{v_n}$ denotes the Dirac measure. 
This means that if $A\subseteq\oix^{m-1}$ with $\gl(\partial A)=0$
and $k\in\setm$, then
$\#\set{n\le N:v_n\in A\times\set{k}}/N\to \gl(A)/m$. 
(Here $\gl$ is the usual Lebesgue measure.)
This is a simple extension of the standard notion of uniform distribution
for a sequence in $\oix^{m-1}$.
We claim the following. (For notational convenience, we state the case
$i=1$ only.)
\begin{lemma}\label{Lu}
  Suppose that $p_1,\dots,p_m$ are \linQ, and let 
$L_n\in\set{0,\dots,m-1}$ 
be the position of  runner $1$ relative to the flag at time $n$.
Then the sequence of
vectors $v_n=(\fract{n(p_2-p_1)},\dots,\fract{n(p_{m}-p_1)},L_n)$, $n\ge1$,
is uniformly distributed on $[0,1)^{m-1}\times\set{0,\dots,m-1}$.
\end{lemma}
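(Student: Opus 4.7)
My plan is to lift $v_n$ to a sequence in $[0,1)^m$ to which the Weyl--Kronecker equidistribution theorem applies, express $L_n$ as an explicit measurable function on the resulting closed subgroup, and then check that the push-forward is uniform.

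Set $u_n := (\{np_1\}, \{n(p_2-p_1)\}, \ldots, \{n(p_m-p_1)\}) \in [0,1)^m$. Since $p_1,\ldots,p_m$ are \linQ\ and $\sum_i p_i = 1$, the unique (up to scaling) nontrivial $\bbQ$-linear relation among $1, p_1, p_2-p_1,\ldots,p_m-p_1$ is $mp_1 + \sum_{j\ge 2}(p_j-p_1) = 1$. By Weyl--Kronecker, $u_n$ will then be equidistributed, with respect to normalized Haar measure, on the closed subgroup
\[
H := \bigl\{(y_1,\ldots,y_m) \in [0,1)^m : my_1 + y_2 + \cdots + y_m \in \bbZ\bigr\}.
\]

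Next I express $L_n$ as a function on $H$. Writing $t := \{np_1\}$ and $d_j := \{n(p_j-p_1)\}$, one has $\{np_j\} = \{t + d_j\}$, and hence $\{np_j\} > t$ iff $t + d_j < 1$. Set $C := \#\{j \ge 2 : t + d_j \ge 1\}$. Then the number of rounded-up states is $k = \sum_i \{np_i\} = mt + \sum_{j \ge 2} d_j - C$, and the rank of runner $1$ among the sorted remainders (rank $1$ being the largest) is $r = m - C$. The runner/flag description used in the proof of \refT{TA} above places runner $1$ at cyclic position $L_n \equiv k - r \pmod m$, so $C$ cancels and
\[
L_n \equiv mt + \sum_{j \ge 2} d_j \pmod m.
\]
On $H$, the right-hand side is precisely the residue modulo $m$ of the integer $my_1 + y_2 + \cdots + y_m$, so $L_n$ is a measurable function $H \to \{0,\ldots,m-1\}$.

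To compute the push-forward of Haar measure on $H$ under the map $(y_1,\ldots,y_m) \mapsto (y_2,\ldots,y_m,L_n)$, fix $(y_2,\ldots,y_m)$ and put $S := y_2 + \cdots + y_m$. The fiber in $H$ is the set of $y_1 \in [0,1)$ with $my_1 + S \in \bbZ$, namely the $m$ equally spaced values $y_1 = (K-S)/m \bmod 1$ as $K$ ranges over any block of $m$ consecutive integers; at these points $L_n = K \bmod m$ takes each value in $\{0,\ldots,m-1\}$ exactly once. Since Haar measure on $H$ disintegrates as Lebesgue on $(y_2,\ldots,y_m)$ times uniform probability on each $m$-point fiber, the push-forward is exactly the uniform measure on $[0,1)^{m-1}\times\{0,\ldots,m-1\}$. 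The discontinuity locus of $L_n$ (where some $t + d_j$ hits $1$) is Haar-null in $H$, so the equidistribution of $u_n$ on $H$ transfers to the desired equidistribution of $v_n$.

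The step I expect to be the main obstacle is extracting the clean identity $L_n \equiv mt + \sum_{j \ge 2} d_j \pmod m$: both $k$ and $r$ individually depend on the combinatorics of the sorted remainders through $C$, yet these dependencies cancel in $k - r$ to leave an expression that is linear and symmetric in $(y_2,\ldots,y_m)$ and is in fact the defining form of the subgroup $H$. Everything else reduces to a standard application of Weyl--Kronecker and a routine fiber count.
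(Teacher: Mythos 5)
Your proof is correct, but it takes a genuinely different route from the paper's. The paper first derives the congruence $L_n\equiv n-\sum_{j=2}^m\floor{n(p_j-p_1)}\pmod m$ by counting seats, and then proves a self-contained equidistribution statement (\refL{Lux}) for vectors $(\fract{ny_1},\dots,\fract{ny_k},\Modm(\sum_j\floor{ny_j}-n))$ while staying on the \emph{full} torus: it rescales $z_j=y_j/m$, applies the basic Weyl theorem (\refL{LW}) to $(\fract{nz_j})_j$, recovers both $\fract{ny_j}$ and $\Modm(\floor{ny_j})$ from $w_{nj}=\fract{nz_j}$ as $\fract{mw_{nj}}$ and $\floor{mw_{nj}}$, and handles the extra ``$-n$'' by splitting into the progressions $n=m\nu+n_0$. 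You instead adjoin the coordinate $\fract{np_1}$, observe that the unique rational relation $mp_1+\sum_{j\ge2}(p_j-p_1)=1$ forces equidistribution on the proper closed subgroup $H=\{y:\;my_1+y_2+\dots+y_m\in\bbZ\}$ of $\bbT^m$, read off $L_n$ as the residue mod $m$ of that integer, and finish by disintegrating Haar measure on $H$ over the $m$-point fibers of the projection onto $(y_2,\dots,y_m)$. This requires the orbit-closure version of Weyl's theorem rather than \refL{LW}, but that is classical (and is precisely the tool the paper itself uses in \refS{SSdep} for the linearly dependent case), and in return it makes the source of the factor $1/m$ structurally transparent. Your derivation of the key congruence is also different and clean: with $k=\sum_i\fract{np_i}$ the number of rounded-up states and $r=m-C$ the rank of runner~1, the combinatorial quantity $C$ cancels in $L_n\equiv k-r$, recovering the paper's \eqref{l}. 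One cosmetic correction: the discontinuity locus of $L$ on $H$ is the wrap-around set where some coordinate representative crosses $0$, not where some $t+d_j$ hits $1$ (that is where $C$ jumps, but $C$ has already cancelled); either set is Haar-null in $H$, so the transfer of equidistribution to the test sets $A\times\set{\ell}$ with $\gl(\partial A)=0$ goes through as you claim.
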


We postpone the proof and first complete the proof of \eqref{ta}.

By \eqref{i+} and \eqref{i-}, for each $j$, at most one of $\iij$ and $\jij$
is non-zero, depending on whether $p_j<p_i$ or $p_j>p_i$.
We simplify the notation by letting $I_j=\iij+\jij$; thus 
$S^+=\sum_{j:p_j<p_i} I_j$ and 
$S^-=\sum_{j:p_j>p_i} I_j$.

For a given $N$, these are random variables, and we have, letting $\P_N$
denote the probability when $n$ is uniformly chosen with $n\le N$,
\begin{equation}\label{pala}
\P_N(S^+-S^-+1<-L)=
\sum_{\ell=0}^{m-1}\P_N(L=\ell\text{ and }S^--S^+-1>\ell).
\end{equation}
As \Ntoo, \refL{Lu} and \eqref{i+}--\eqref{i-} show that the distribution
of $L=L_n$ converges to the uniform distribution on $\setm$ and the distribution
of $I_j$ converges to $\Be(|p_i-p_j|)$ for all $j\neq i$;
moreover, these are asymptotically independent.
Hence, \eqref{pala} yields 
\begin{equation*}
  \begin{split}
\P_N(S^+-S^-+1<-L)
&\to
\sum_{\ell=0}^{m-1}\frac1m\P(\si--\si+-1>\ell)
\\&
=\frac1m \E\bigpar{\si--\si+-1}_+,	
  \end{split}
\end{equation*}
where 
$\si+=\sum_{j:p_j<p_i} I_j\qi$ and 
$\si-=\sum_{j:p_j>p_i} I_j\qi$ with $I_j\qi\sim\Be(|p_i-p_j|)$ independent.
This is the result stated in \eqref{ta}.

We proceed to show \refL{Lu}. 
First recall a  well-known result by Weyl.
(The standard proof is by showing that the Fourier transform (characteristic
function) 
$\frac1N\sum_{n=1}^N \exp\bigpar{2\pi\ii\sum_{j=1}^k n_j\fract{ny_j}}\to0$,
as \Ntoo, for any fixed integers $n_1,\dots,n_k$, not all 0,
see for example \cite[Exercises 3.4.2--3]{Grafakos}.)

\begin{lemma}[Weyl]\label{LW}
Suppose that $y_1,\dots,y_k$ and\/ $1$ are \linQ. 
Then the sequence of vectors 
$(\fract{ny_1},  \dots,\fract{ny_k})\in[0,1)^k$ 
is uniformly distributed in $[0,1)^k$.
\nopf
\end{lemma}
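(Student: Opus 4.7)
My plan is the standard Weyl-criterion argument via Fourier analysis. Setting $v_n := (\fract{ny_1},\dots,\fract{ny_k})$, the goal is to show
$$N\qw\#\set{n\le N:v_n\in A}\to \gl(A) \asNtoo$$
for every Borel set $A\subseteq\oix^k$ with $\gl(\partial A)=0$.

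First I would reduce to continuous test functions on the torus: a Portmanteau-style sandwich of $\ett{A}$ between continuous functions $f_-\le\ett{A}\le f_+$ on $\bbR^k/\bbZ^k$ with $\int(f_+-f_-)<\eps$ reduces the task to proving
$$\frac1N\sum_{n=1}^N f(v_n)\to\int_{\oi^k}f(x)\dd x \asNtoo$$
for every continuous $1$-periodic function $f:\bbR^k\to\bbC$. Since the trigonometric polynomials are dense in $C(\bbR^k/\bbZ^k)$ by Stone--Weierstrass, it further suffices to verify this for each character $f(x) = \exp\bigpar{2\pi\ii\langle\mathbf n,x\rangle}$ with $\mathbf n\in\bbZ^k$.

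Next I would evaluate the resulting exponential sum. For $\mathbf n = 0$ both sides equal $1$, so assume $\mathbf n\neq 0$. Then the character integrates to $0$ over $\oi^k$, while the $1$-periodicity of the complex exponential lets me drop the fractional parts, $\exp\bigpar{2\pi\ii n_j\fract{ny_j}} = \exp\bigpar{2\pi\ii nn_jy_j}$, giving
$$\frac1N\sum_{n=1}^N \exp\Bigpar{2\pi\ii\sum_{j=1}^k n_j\fract{ny_j}} = \frac1N\sum_{n=1}^N e^{2\pi\ii n\ga}, \qquad \ga := \sum_{j=1}^k n_j y_j.$$

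The key use of the hypothesis comes at the final step. If $\ga\in\bbZ$, then $\sum_j n_j y_j - \ga\cdot 1 = 0$ would be a nontrivial rational dependence among $y_1,\dots,y_k,1$, contradicting the assumption. Hence $e^{2\pi\ii\ga}\neq 1$ and summing the geometric series gives
$$\Bigabs{\frac1N\sum_{n=1}^N e^{2\pi\ii n\ga}} \le \frac{2}{N\bigabs{1-e^{2\pi\ii\ga}}}\to 0 \asNtoo,$$
as required. The hypothesis enters only to exclude $\ga\in\bbZ$, and the remaining ingredients (the sandwich reduction and Stone--Weierstrass density) are entirely standard, so there is no genuine obstacle in the proof.
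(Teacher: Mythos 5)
Your proof is correct and follows exactly the route the paper indicates: the paper states this lemma without proof but sketches precisely the Weyl criterion argument (reduce to the exponential sums $\frac1N\sum_{n=1}^N \exp\bigpar{2\pi\ii\sum_j n_j\fract{ny_j}}$ and show they vanish for $\mathbf n\neq0$ using the linear independence to rule out $\sum_j n_jy_j\in\bbZ$), referring to Grafakos for details. Your write-up simply fills in the standard reductions (Portmanteau sandwich and Stone--Weierstrass) that the paper leaves to the reference.
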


We will need the following extension.
Let $\Modm(x)=m\fract{x/m}$; this is the remainder when $x$ is divided by
$m$. Thus, if $r$ is an integer, then $\Modm(r)$ is the unique integer in
$\setm$ such that $\Modm(r)\equiv r \pmod m$.

\begin{lemma}\label{Lux}
Suppose that $y_1,\dots,y_k$ and\/ $1$ are \linQ. 
Let $\ell_n=\Modm\bigpar{ \bigpar{\sum_{j=1}^k\floor{ny_j}}-n}\in\set{0,\dots,m-1}$. 
Then the sequence of vectors 
$(\fract{ny_1},  \dots,\fract{ny_k}, \ell_n)\in[0,1)^k\times\set{0,\dots,m-1}$ 
is uniformly distributed in $[0,1)^k\times\set{0,\dots,m-1}$.
\end{lemma}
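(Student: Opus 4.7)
The plan is to lift the problem to the larger torus $(\bbR/m\bbZ)^k$, where the discrete quantity $\ell_n$ is encoded in the integer parts of a single set of real coordinates.

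First I show that the joint sequence
$\bigpar{ny_1 \bmod m,\ldots,ny_k \bmod m,\, n \bmod m}$,
taking values in $[0,m)^k\times\setm$, is uniformly distributed. Partition $\set{1,\dots,N}$ by residue class $r=n\bmod m$; on the subsequence $n=tm+r$, the first $k$ coordinates equal $m\fract{ty_j+ry_j/m}$, which on the torus $\bigpar{\bbR/m\bbZ}^k$ is a translate of $\bigpar{m\fract{ty_1},\dots,m\fract{ty_k}}$. Since $y_1,\dots,y_k,1$ are \linQ, \refL{LW} gives that $(\fract{ty_j})_{j=1}^k$ is uniformly distributed in $[0,1)^k$ as $t$ varies, and translation preserves equidistribution; summing over the $m$ residue classes, each of asymptotic density $1/m$, yields the claim.

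Next, the map $x\mapsto(\floor{x},\fract{x})$ from $[0,m)$ to $\setm\times[0,1)$ is a measure-preserving bijection, and the identity $ny_j\bmod m=(\floor{ny_j}\bmod m)+\fract{ny_j}$ shows that it sends $ny_j\bmod m$ to $(\floor{ny_j}\bmod m,\,\fract{ny_j})$. Applying it coordinatewise upgrades the previous step to the statement that
\begin{equation*}
\bigpar{\fract{ny_1},\dots,\fract{ny_k},\, \floor{ny_1}\bmod m,\dots,\floor{ny_k}\bmod m,\, n\bmod m}
\end{equation*}
is uniformly distributed on $[0,1)^k\times(\setm)^{k+1}$. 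In particular, the continuous vector $(\fract{ny_j})_{j=1}^k$ is asymptotically independent of the discrete part, which is uniform on $(\bbZ/m\bbZ)^{k+1}$.

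Finally, $\ell_n=\Modm\bigpar{\sum_j\floor{ny_j}-n}$ is a function of the discrete part alone, namely its image under the surjective group homomorphism $(a_1,\dots,a_k,r)\mapsto \sum_j a_j-r\pmod m$ from $(\bbZ/m\bbZ)^{k+1}$ to $\bbZ/m\bbZ$. A surjective homomorphism pushes uniform to uniform, so $\ell_n$ is asymptotically uniform on $\setm$ and, jointly with $(\fract{ny_j})_{j=1}^k$, attains the desired uniform distribution on $[0,1)^k\times\setm$. The main obstacle is the first step: one must notice that working at scale $m$ (rather than $1$) is what allows $\ell_n$ to be recovered from the real coordinates, since reducing directly to $[0,1)^k$ destroys exactly the integer-part information on which $\ell_n$ depends.
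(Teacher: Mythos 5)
Your proof is correct and follows essentially the same route as the paper's: both rescale to work modulo $m$ so that Weyl's theorem (\refL{LW}) gives the fractional parts $\fract{ny_j}$ jointly and uniformly with the residues $\Modm(\floor{ny_j})$ and $n \bmod m$, after which the uniform distribution on the discrete factor is pushed forward under $(a_1,\dots,a_k,r)\mapsto \sum_j a_j-r \pmod m$. The only organizational difference is that you dispose of the $n\bmod m$ coordinate at the outset by partitioning into residue classes, whereas the paper handles it last via the subsequences $n=m\nu+n_0$; the two maneuvers are interchangeable.
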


\begin{proof}
%
Let $z_j=y_j/m$ and $w_n=(w_{n1},\dots,w_{nk})$ with $w_{nj}=\fract{nz_j}$,
$j=1,\dots,k$. Then $z_1,\dots,z_k$ and 1 are \linQ, and thus 
\refL{LW} (Weyl's theorem) shows that the sequence $(w_n)_{n\ge1}$ is uniformly
distributed in $\oix^k$. 
Further,
\begin{equation*}
 ny_j-mw_{nj}=mnz_j-m\fract{nz_j}=m\floor{nz_j}\equiv0\pmod m. 
\end{equation*}
Hence,
\begin{align}
  \fract{ny_j}=\fract{mw_{nj}}
\qquad\text{and}\qquad
\floor{ny_j}\equiv\floor{mw_{nj}}\pmod m.
\end{align}
Thus, $\Modm(\floor{ny_j})=\floor{mw_{nj}}$.

Let $\hl_{nj}=\Modm(\floor{ny_j})=\floor{mw_{nj}}$.
If a sequence $(u_n)$ is uniformly distributed in $\oix$, then $(mu_n)$ is
uniformly distributed in $[0,m)$ and the vectors 
$(\fract{mu_n},\floor{mu_n})$
are \ud{} in $\oix\times\setm$. Using this argument in each coordinate,
the fact that $(w_n)$ is \ud{} in $\oix^k$ implies that the sequence
of vectors 
$(\fract{ny_j},\hl_{nj})_{j=1}^k=(\fract{mw_{nj}},\floor{mw_{nj}})_{j=1}^k$
is \ud{} in $\oix^k\times\setm^k$.

Let $\tl_n=\Modm\bigpar{\sum_{j=1}^k\floor{ny_j}}$.
Then $\tl_n=\Modm\bigpar{\sum_{j=1}^k\hl_{nj}}$, and it follows that the
sequence of vectors 
$(\fract{ny_1},  \dots,\fract{ny_k}, \tl_n)$ 
is uniformly distributed in $[0,1)^k\times\set{0,\dots,m-1}$.

This is almost what we claim. To complete the proof, we consider a
subsequence of the form $n=m\nu+n_0$, $\nu\ge1$. 
Weyl's theorem holds for such
subsequences too, as a consequence of the version in \refL{LW} since
$\fract{(m\nu+n_0)p_j}=\fract{\nu(mp_j)+n_0p_j}$ where $mp_1,\dots,mp_k$ and 1
are \linQ, and the constant shift by $n_0p_j$ does not affect the uniform
distribution. Consequently, the argument above shows that  
$(\fract{ny_1},  \dots,\fract{ny_k}, \tl_n)$ is \ud{} for each such
subsequence. But along the subsequence, $\ell_n=\Modm(\tl_n-n_0)$, so 
$(\fract{ny_1},  \dots,\fract{ny_k}, \ell_n)$ is  \ud{} for each such
subsequence, and thus for the entire sequence.
\end{proof}

\begin{proof}[Proof of \refL{Lu}]
Suppose that runner 1 carries the flag at time $n$, \ie, $L_n=0$. 
Then state $1$ gets an
additional seat, \ie, its number of seats is rounded up to $\ceil{np_1}$,
and state $j$ gets
\begin{equation*}
  \ceil{np_j-\fract{np_1}}
=
  \ceil{np_j-{np_1}}+\floor{np_1}
=  \floor{np_j-{np_1}}+\ceil{np_1}
\end{equation*}
seats.
Since the total number of seats is $n$, we have, still in the case $L_n=0$,
\begin{equation*}
  n=\sum_{j=2}^m \floor{np_j-{np_1}}+m\ceil{np_1}
\equiv\sum_{j=2}^m \floor{np_j-{np_1}}\pmod m.
\end{equation*}
In general, there are $L_n$ additional states whose numbers of
seats are rounded up (or $-L_n$ fewer, if $L_n<0$); thus 
\begin{equation*}
  n
\equiv L_n+\sum_{j=2}^m \floor{np_j-np_1}\pmod m
\end{equation*}
and
\begin{equation}\label{l}
 L_n
\equiv n-\sum_{j=2}^m \floor{n(p_j-p_1)}\pmod m.
\end{equation}

Let $y_j=p_j-p_1$, $j=2,\dots,m$. Since \ppm{} are \linQ, 
it is easily seen that $y_2,\dots,y_m$ and $\sum_1^m p_j=1$ also are \linQ.
(This can be seen as a change of basis, using a non-singular integer matrix,
in a vector space of dimension $m$ over $\bbQ$.)
Thus \refL{Lu} follows from \refL{Lux} 
(with $k=m-1$, after renumbering $y_2,\dots,y_m$),
since $L_n\equiv -\ell_n\pmod m$ by \eqref{l}.
\end{proof}

This completes the proof of \eqref{ta}.
We proceed to show that this can be evaluated as \eqref{tab}. 

We may assume that the states are ordered by size,
$p_1\ge\dots \ge p_m$; thus 
$p_i=p\xx i$. 
Consider the $\wj$:th largest state.  
Let $X_-\subseteq [\wj-1]$ and $X_+\subseteq \{\wj+1,\dots,m\}$, 
where as usual $[n]:=\{1,\dots,n\}$. 
We think of
$X_-$ and $X_+$ as the indices $\wi$ for which $I_\wi^{(\wj)}=1$. Let
$P_=(X_-,X_+)$ be the probability that $\set{\wi:I_\wi^{(\wj)}=1}= X_-\cup
X_+$.  Then clearly, 
for simplicity writing $\br_j\=\br_j\qi=|p_i-p_j|$,
\[P_=(X_-,X_+)=\prod_{\wi\in X_-\cup X_+} \br_\wi \prod_{\wi\in [m]\backslash (X_-\cup X_+\cup \{\wj\})} (1-\br_\wi).
\]

The formula \eqref{ta}
for the probability that the $\wj$:th largest state suffers
from the Alabama paradox is thus 
\begin{multline*}
m\ttq\xx \wj=
\\\sum_{X_-\subseteq [\wj-1],X_+\subseteq \{\wj+1,\dots,m\}} (|X_-|-|X_+|-1)\;\cdot 
\prod_{\wi\in X_-\cup X_+} \br_\wi \prod_{\wi\in [m]\backslash (X_-\cup X_+\cup \{\wj\})} (1-\br_\wi),
\end{multline*}
where the sum runs over all pairs of subsets such that $|X_-|\ge |X_+|+2$.

Now, for any $s\ge 0$, $k\ge 2$ and any monomial 
$\prod_{\mu=1}^k \br_{j_\mu}\cdot\prod_{\nu=1}^s \br_{l_\nu}$, where 
$1\le j_1<\dots<j_k\le \wj-1$ and $\wj+1\le l_1<\dots < l_{s}\le m$, we get
the coefficient 
\[
\sum_{h=2}^k \sum_{u=0}^{\min\{h-2,s\}} (h-u-1) \binom{k}{h}\binom{s}{u}(-1)^{k-h+s-u}=:A(k,s)(-1)^{k+s}.
\]
Now, we only need to prove that $A(k,s)=\binom{s+k-2}{s}$. 

For notational convenience 
we will prove this statement for $k+1$. We start by splitting the first
binomial coefficient, and then  substituting $h\to h+1$ in the second part.
\begin{align*}
A(k+1,s)&=\sum_{h=2}^{k+1} (-1)^{h}\left( \binom{k}{h}+\binom{k}{h-1}\right) \sum_{u=0}^{\mins{h-2}} 
(-1)^{u} \binom{s}{u} (h-u-1) \\
&=A(k,s)+\sum_{h=1}^{k}(-1)^{h+1} \binom{k}{h} \sum_{u=0}^{\mins{h-1}} (-1)^{u} \binom{s}{u}(h-u)\\
&=A(k,s)+\sum_{h=1}^{k}(-1)^{h+1} \binom{k}{h} \sum_{u=0}^{\mins{h-2}}
(-1)^{u} \binom{s}{u}(h-u)\\
&\phantom{=A(k,s)\hskip3pt}
+\sum_{h=1}^k(-1)^{h+1}\binom{k}{h}(-1)^{h-1}\binom{s}{h-1}\\
&=\sum_{h=2}^{k}(-1)^{h+1} \binom{k}{h} \sum_{u=0}^{\mins{h-2}} (-1)^{u} \binom{s}{u}
+\sum_{h=1}^k\binom{k}{k-h}\binom{s}{h-1}.
\end{align*}
Using Lemma \ref{L:Identitet} below on the first sum and the Vandermonde
convolution 
$\sum_{i=0}^r\binom{x}{i}\binom{y}{r-i}=\binom{x+y}{r}$ on the last we get 
\[
A(k+1,s)=-\binom{s+k-1}{s+1} +\binom{s+k}{s+1}=\binom{s+k-1}{s}.
\]
The formula \eqref{tab} follows from the following lemma.

\begin{lemma} \label{L:Identitet} For any integers $k\ge 2$, $s\ge 0$ we have
\[\sum_{h=2}^{k}(-1)^{h} \binom{k}{h} \sum_{u=0}^{h-2} (-1)^{u} \binom{s}{u}=\binom{s+k-1}{s+1}
\]
\end{lemma}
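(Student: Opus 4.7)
The plan is to reduce the double sum to a single Vandermonde convolution by first simplifying the inner partial alternating sum. The key auxiliary identity is the standard
\begin{equation*}
\sum_{u=0}^{r}(-1)^{u}\binom{s}{u}=(-1)^{r}\binom{s-1}{r},
\end{equation*}
which is easily proved by induction on $r$ using Pascal's rule (or equivalently using the telescoping $\binom{s}{u}=\binom{s-1}{u}+\binom{s-1}{u-1}$), with the convention $\binom{-1}{j}=(-1)^{j}$ so that the formula remains valid at $s=0$. The case $s=0$ can alternatively be handled separately: then only $u=0$ contributes to the inner sum, the LHS becomes $\sum_{h=2}^{k}(-1)^{h}\binom{k}{h}=k-1=\binom{k-1}{1}$, matching the RHS.

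Assume now $s\ge 1$. Applying the auxiliary identity with $r=h-2$ (so the sign $(-1)^{h-2}=(-1)^{h}$ cancels the outer $(-1)^{h}$) collapses the double sum to
\begin{equation*}
\sum_{h=2}^{k}(-1)^{h}\binom{k}{h}\sum_{u=0}^{h-2}(-1)^{u}\binom{s}{u}
=\sum_{h=2}^{k}\binom{k}{h}\binom{s-1}{h-2}.
\end{equation*}
Shifting the index $h\mapsto h+2$ (or equivalently rewriting $\binom{s-1}{h-2}=\binom{s-1}{(s-1)-(h-2)}=\binom{s-1}{s+1-h}$) and observing that $\binom{s-1}{s+1-h}$ vanishes outside $2\le h\le s+1$ while $\binom{k}{h}$ vanishes outside $0\le h\le k$ (so that extending the summation over all integers $h$ introduces no new terms), we get
\begin{equation*}
\sum_{h=2}^{k}\binom{k}{h}\binom{s-1}{s+1-h}=\sum_{h\in\mathbb{Z}}\binom{k}{h}\binom{s-1}{s+1-h}=\binom{k+s-1}{s+1}
\end{equation*}
by the Vandermonde convolution.

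The main obstacle is essentially notational: one must be careful that the restricted range $h\in[2,k]$ in the LHS actually captures all nonzero Vandermonde contributions, and that $s=0$ (which forces the use of the extended binomial convention $\binom{-1}{\cdot}$) does not slip through. Both issues are minor and handled by the case split above, so the proof reduces to two well-known combinatorial identities applied in sequence.
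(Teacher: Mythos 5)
Your proof is correct and follows essentially the same route as the paper's: both collapse the inner alternating sum to $(-1)^{h-2}\binom{s-1}{h-2}$ (you by induction via Pascal's rule, the paper via upper negation and parallel summation) and then finish with a single Vandermonde convolution, the paper's answer $\binom{s+k-1}{k-2}$ being the same binomial coefficient as your $\binom{s+k-1}{s+1}$. Your extra care with the boundary case $s=0$ and with the range of summation in the Vandermonde step is sound but not a substantive difference.
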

\begin{proof} First note that by standard binomial identities we have 
\[ \sum_{u=0}^{h-2} (-1)^{u} \binom{s}{u}
= \sum_{u=0}^{h-2} \binom{-s+u-1}{u}
=\binom{-s+h-2}{h-2}
= (-1)^{h-2}\binom{s-1}{h-2}.
\]
We may now use the Vandermonde convolution to get, with $j=h-2$, 
\[
\sum_{h=2}^{k}(-1)^{h} \binom{k}{h} \sum_{u=0}^{h-2} (-1)^{u} \binom{s}{u}
=\sum_{j=0}^{k-2} \binom{k}{k-2-j} \binom{s-1}{j}=\binom{s+k-1}{k-2}.
\]
\end{proof}

This completes the proof of \refT{TA}.

\section{Proofs of corollaries}\label{SpfC}

\begin{proof}[Proof of \refC{C3}]
  When $m=3$, 
the double sum in \eqref{tab} is non-empty only if $i=3$; in this case there
is a single term with $s=0$ and $k=2$  and the result
follows immediately.

Alternatively, we can use \eqref{ta}:
$\si--\si+-1>0$ is possible only with $\si-=2$ and
  $\si+=0$; this requires that at least 2 states are larger than state $i$
  so $i=3$, and in this case the probability is 
  \begin{equation*}
\frac13\P\bigpar{S_3^--S_3^+=2}	
=\frac13\P\bigpar{I\qx3_1=I\qx3_2=1} 
=\frac13(p\xx 1-p\xx 3)(p\xx 2-p\xx 3).
\qedhere	
  \end{equation*}
\end{proof}

\begin{proof}[Proof of \refC{Cmin}]
The fact that $\ttq\xx m\ge \ttq\xx {m-1}\ge\dots$ follows from \eqref{ta}
and a simple 
coupling argument. Furthermore, if $i\le 2$, then $\si-\le1$ and
$q\xx i=0$.

For $i=m$, \eqref{tab} simplifies to
\begin{equation}\label{eri}
  \begin{split}
m\ttq\xx m&=\sum_{k=2}^{m-1}(-1)^k e_k((p\xx 1-p\xx m),\dots,(p\xx{m-1}-p\xx m))
%
\\&
=\prod_{j=1}^{m-1}\bigpar{1-(p\xx j-p\xx m)} + \sum_{j=1}^{m-1} (p\xx j-p\xx m)-1
\\&
=\prod_{j=1}^{m-1}\bigpar{1-(p\xx j-p\xx m)} -mp\xx m,
  \end{split}
\end{equation}
which is \eqref{cmin1}.

Furthermore, it follows from \eqref{cmin1} (or \eqref{ta})
that $\ttq\xx m$ will increase if
we 
decrease $p\xx m$ to 0 and simultaneously increase $p\xx 1$, say.
For $p\xx m=0$, the product in \eqref{eri} is largest when all $p_j$ for $j<m$
are equal, \ie{} $p\xx j=1/(m-1) $ for $j<m$; in this case 
\eqref{eri} yields $(1-1/(m-1))^{m-1}$.
Hence, for any $p\xx m>0$ and any $i$,
\begin{equation}
  q\xx i\le q\xx m < \frac1m\Bigpar{1-\frac{1}{m-1}}^{m-1}<\frac1m e^{-1}.
\end{equation}

Finally,
for any $i$,
\begin{equation}
  \E\bigpar{\si--\si+}
  =\sum_{p_j>p_i}(p_j-p_i)-\sum_{p_j<p_i}(p_i-p_j)
= \sum_{j}(p_j-p_i) = 1-mp_i.
\end{equation}
Consequently, using \eqref{ta},
\begin{equation}\label{magnus}
  \begin{split}
m\ttq_i=\E&\bigpar{\si--\si+-1}_+
\ge
\E\lrpar{\bigpar{\si--1}_+-\si+}
\\&
=\E\bigpar{S_i^--1+\ett{S_i^-=0}}-\E \si+
\\&
=\E \si--1+\P\bigpar{\si-=0}-\E \si+	
\\&
=\P\bigpar{\si-=0}-mp_i
=\prod_{p_j>p_i}\bigpar{1-(p_j-p_i)}-mp_i.
  \end{split}
\end{equation}
If $x\in\oi$, then $1-x\ge e^{-x}-\frac12x^2$, and it follows that
\begin{equation*}
  \prod_{p_j>p_i}\bigpar{1-(p_j-p_i)}
\ge \prod_{p_j>p_i}e^{-(p_j-p_i)}
-\frac12\sum_{p_j>p_i}(p_j-p_i)^2
\ge e^{-\sum_j p_j}
-\frac12\sum_{j}p_j^2,
\end{equation*}
which by \eqref{magnus} yields  the lower bound in \eqref{cmin2}.
\end{proof}

\begin{proof}[Proof of \refC{Ce}]
The expected number  is $\sum_j q_j$, and 
\eqref{cmin2} shows that $\sum_j q_j< \frac{m}{em}=e^{-1}$.

In the special case, 
\eqref{cmin2} shows that $q_i\sim e^{-1}/m$ for each small state,
and thus 
$\sum_j q_j\sim e^{-1}$.
\end{proof}

\begin{proof}[Proof of \refC{CP}]
  It is well-known and easy to see that if $I\sim\Be(r)$ and $Y\sim\Po(r)$,
  then $I$ and $Y$ may be coupled with $\E|I-Y|=2(e^{-r}-1+r)\le r^2$.
Couple in this way $I\qi_j$ with $Y_j\sim\Po(|p_i-p_j|)$, with the latter
independent, and define 
$\hS^+=\sum_{j:p_j<p_i} Y_j\sim\Po(\gl^+)$ and 
$\hS^-=\sum_{j:p_j>p_i} Y_j\sim\Po(\gl^-)$.
Then, by \eqref{ta}, \eqref{cp} and the triangle inequality,
\begin{equation*}
  \begin{split}
  m|\ttq_i-\hq_i|
&\le \E\bigabs{\si--\si+-(\hS^--\hS^+)}
\\&
\le \sum_{j:p_j>p_i}\E|I\qi_j-Y_j|
+\sum_{j:p_j<p_i} \E|I\qi_j-Y_j|
\\&
\le \sum_{j:p_j>p_i} (p_j-p_i)^2+\sum_{j:p_j<p_i} (p_i-p_j)^2.	
  \end{split}
\end{equation*}
This proves \eqref{cp}, which immediately yields \eqref{cp+}. To obtain
\eqref{cp-} we observe that the sum is
$\E\bigpar{\hS^++1-\hS^-}_+=-\E\bigpar{\hS^--\hS^+-1}_-$, 
and thus the difference between the sums
in \eqref{cp+} and \eqref{cp-} equals
\begin{equation*}
  \begin{split}
&\hskip-2em
\E\bigpar{\hS^--\hS^+-1}_++
\E\bigpar{\hS^--\hS^+-1}_-  
=
\E\bigpar{\hS^--\hS^+-1}
\\&=
\sum_{j:p_j>p_i} (p_j-p_i)-\sum_{j:p_j<p_i} (p_i-p_j)-1
=
\sum_{j=1}^m (p_j-p_i)-1
\\&
=1-mp_i-1
=-mp_i.
\qedhere
  \end{split}
\end{equation*}	
\end{proof}

\section{Proofs of results on average probabilities}\label{Spfrandom}

We let $\Da^m:=\{(x_1,\dots,x_m)\in \ooo^m:  \sum_ix_i=a\}$ and
$\D_{\le a}^m:=\{(x_1,\dots,x_m)\in \ooo^m: \sum_ix_i\le a\}$.
When integrating over $\Da^m$, we use the measure 
$\ddxq\=\dd x_1\dotsm \dd x_{m-1}$; this is thus the same as integrating
over $\D_{\le a}^{m-1}$ with Lebesgue measure, keeping $x_m=1-\sum_1^{m-1} x_i$.
Note that the volume of $\Da^m$ equals the volume of $\D_{\le a}^{m-1}$, i.e.\ 
$a^{m-1}/(m-1)!$. Hence, the uniform probability measure on $\D^m=\D_1^m$ is
$(m-1)!\,\ddxq$.

More generally, we have the well-known Dirichlet integral
\begin{equation}
  \label{dir}
\int_{\Da^m}x_1^{\ga_1-1}\dotsm x_m^{\ga_m-1}\ddxq
=a^{\ga_1+\dots+\ga_m-1}\frac{\prod_{i=1}^m\gG(\ga_i)}{\gG\bigpar{\sum_{i=1}^m\ga_i}}
\end{equation}
for any $\ga_1,\dots,\ga_m>0$. (For $m=2$ this is the standard Beta integral
and the general case follows easily by induction.
An alternative, probabilistic, standard proof is to 
let $T_1,\dots,T_m$ be independent $\Exp(1)$ variables and 
evaluate 
$\E(T_1^{\ga_1-1}\dotsm T_m^{\ga_m-1})$ by conditioning on $T_1+\dots+T_m$.)

\begin{proof}[Proof of \refT{T:expected}]
Recall from Corollary \ref{Cmin} that if we assume $p_m\le p_i$ for all
$i$ and let $r_i:=p_i-p_m$, 
then 
$q_m=\frac{1}{m}\prod_{i=1}^{m-1} (1-r_i) -p_m$. Choosing a vector 
$(p_1,\dots,p_m)$ uniformly from
$\D_1^m$, there are $m$ possibilities for the position of the minimum
coordinate; by symmetry it suffices to consider the case when $p_m$ is 
the minimum (multiplying below by a factor $m$). 
Then the vector $(r_1,\dots,r_{m-1})$ is uniformly distributed in
$\D_{1-mp_m}^{m-1}$ and $\ddxq=\dd p_m\dd\bfr$.
We thus get
\begin{equation}\label{expected}
  \begin{split}
\E q\xx m=&m(m-1)!\int_0^{1/m} \int_{{\bfr}\in\D_{1-mp_m}^{m-1}} 
\left(\frac1m\prod_{i=1}^{m-1}(1-r_i) - p_m  \right)\dd\bfr\, \dd p_m
.	  \end{split}
\end{equation}
We treat the two terms in the bracket separately.
First, using symmetry and \eqref{dir},
\begin{equation}\label{texp1}
  \begin{split}
(m-1)!&\int_0^{1/m} \int_{{\bfr}\in\D_{1-mp_m}^{m-1}} \prod_{i=1}^{m-1}(1-r_i)
\dd\bfr\, \dd p_m\\ 
&=(m-1)!
\sum_{k=0}^{m-1}\binom{m-1}k (-1)^k
\int_0^{1/m} \int_{{\bfr}\in\D_{1-mp_m}^{m-1}} \prod_{i=1}^{k}r_{i}	
\dd\bfr\, \dd p_m\\ 
&=
\sum_{k=0}^{m-1}\binom{m-1}k (-1)^k
\int_0^{1/m} (1-mp_m)^{k+m-2}\frac{(m-1)!}{\gG(k+m-1)} \dd p_m\\ 
&=
\sum_{k=0}^{m-1}\binom{m-1}k (-1)^k
\frac{(m-1)!}{m\gG(k+m)} \\ 
&=
\frac1m
\sum_{k=0}^{m-1}(-1)^k\binom{m-1}k \frac{1}{ m\rise k} .
  \end{split}
\raisetag{1.5\baselineskip}
\end{equation}
Similarly, the second term becomes
\begin{equation}\label{texp2}
  \begin{split}
m(m-1)!&\int_0^{1/m} \int_{{\bfr}\in\D_{1-mp_m}^{m-1}} p_m
\dd\bfr\, \dd p_m
=m(m-1)
\int_0^{1/m} (1-mp_m)^{m-2} p_m \dd p_m\\ 
&=\frac{m-1}{m}
\int_0^{1} (1-x)^{m-2} x \dd x
=\frac1{m^2}.  
\end{split}
\end{equation}
The formula \eqref{texp} follows from \eqref{texp1} and \eqref{texp2},
noting that the first two terms in the first sum equal
$1-\frac{m-1}m=\frac1m$, which cancels the term $-1/m^2$.

The asymptotic expansion \eqref{texpoo} is easy to deduce from
\eqref{texp}. 
(One can also easily obtain further terms.)

Since $q\xx m\le e\qw/m$ a.s., 
we have $\E|e\qw-mq\xx m|=\E(e\qw-mq\xx m)=e\qw-m\E
q\xx m\to0$, and thus $mq\xx m\pto e\qw$.
\end{proof}

\begin{proof}[Proof of \refT{TE}]
Take $i=m$.
We take expectations in \eqref{tabb} and obtain by symmetry
\begin{multline}\label{tea}
\E q_m=
\frac{1}{m}\sum_{s=0}^{m-3}\sum_{k=2}^{m-s-1}(-1)^{s+k}\binom{s+k-2}{s} 
\binom{m-1}{s,k,m-1-s-k} 
\\\cdot
\E\lrpar{ \prod_{j=1}^s(p_m-p_{j})_+ \prod_{l=1}^k(p_{s+l}-p_{m})_+}  .
\end{multline}

We use the standard method of generating uniform $\ppm$ on $\gD^m$ by taking
\iid{} exponential random variables $T_1,\dots,T_m\sim\Exp(1)$ and letting
$p_i\=T_i/S_m$ with $S_m\=\sumim T_i$.
Recall that then $(\ppm)$ and $S_m$ are independent, 
and that $S_m$ has a Gamma$(m,1)$ 
distribution with 
$\E S_m^\ga=\gG(m+\ga)/\gG(m)$ for $\ga\in\bbN$.
Hence, by conditioning on $T=T_m$ and using independence, 
\begin{equation}\label{teb}
  \begin{split}
\E\biggpar{ &\prod_{j=1}^s(p_m-p_{j})_+ \prod_{l=1}^k(p_{s+l}-p_{m})_+}  
\\&
=
\frac{
\E\lrpar{ \prod_{j=1}^s(T_m-T_{j})_+ \prod_{l=1}^k(T_{s+l}-T_{m})_+} } 
{\E S_m^{s+k}}
\\
&=\frac{\gG(m)}{\gG(m+k+s)}
\E\lrpar{ \bigpar{\E((T-T_{1})_+\mid T)}^s \bigpar{\E((T_{2}-T)_+\mid T)}^k } 
.	
  \end{split}
\end{equation}
For any $t>0$ and $j\ge1$ we have
\begin{align}\label{et+}
\E(t-T_j)_+	&=\int_0^t (t-x)e^{-x}\dd x = e^{-t}-1+t  ,
\\
\E(T_j-t)_+	&=\int_t^\infty (x-t)e^{-x}\dd x= e^{-t}. \label{et-}
\end{align}
Hence, the final expectation in \eqref{teb} equals
\begin{equation}\label{tec}
  \begin{split}
\E\Bigpar{ &\bigpar{e^{-T}-1+T}^s e^{-kT} }
=
\intoo \bigpar{e^{-t}-1+t}^s e^{-(k+1)t} \dd t
\\&
= \intoo 
\sum_{i+j\le s}
\binom{s}{i,j,s-i-j} e^{-it}t^j(-1)^{s-i-j} e^{-(k+1)t} \dd t
\\&
=\sum_{i+j\le s}  (-1)^{s-i-j}
\binom{s}{i,j,s-i-j} \intoo t^j e^{-(i+k+1)t} \dd t
\\&
=\sum_{i+j\le s}  (-1)^{s-i-j}
\binom{s}{i,j,s-i-j} \frac{j!}{(i+k+1)^{j+1}}.
  \end{split}
\end{equation}
The result now follows from \eqref{tea}--\eqref{tec}.
\end{proof}

\begin{proof}[Proof of \refT{Too}]
Note first that the expectation in \eqref{xcp} can be evaluated as in
\eqref{xcp+}--\eqref{xcp-}. For \eqref{xcp+} this is immediate; for
\eqref{xcp-} it follows from 
$$
\E\bigpar{\hS^--\hS^+-1}_+-\E\bigpar{\hS^--\hS^+-1}_-
=
\E\bigpar{\hS^--\hS^+-1}=\gl^--\gl^+-1.
$$

We let, as in the proof of \refT{TE}, 
$p_i\=T_i/S_m$ with $T_i\sim\Exp(1)$ \iid{} 
and $S_m\=\sumim T_i$.
Consider state 1 and condition on $T_1$, leaving $T_2,T_3,\dots$ \iid{}
$\Exp(1)$. We define $\gl^\pm\=\sum_j(p_1-p_j)_\pm$ as in \refC{CP} (with
$i=1$). 
As \mtoo, the law of large numbers shows that a.s., 
using \eqref{et+}--\eqref{et-},
\begin{align}\label{sjw}
  \frac{S_m}m&
=\frac{T_1}m+\sum_{j=2}^m\frac{T_j}{m}
\to0+\E T_2=1,
\\ \notag
\gl^+&=\sum_{j=2}^m\frac{(T_1-T_j)_+}{S_m}
=\frac{m}{S_m}\sum_{j=2}^m\frac{(T_1-T_j)_+}{m}
\\& \label{sjw+}
\to \E\bigpar{(T_1-T_2)_+\mid T_1} = e^{-T_1}-1+T_1,
\\ \notag
\gl^-&=\sum_{j=2}^m\frac{(T_1-T_j)_-}{S_m}
=\frac{m}{S_m}\sum_{j=2}^m\frac{(T_1-T_j)_-}{m}
\\& \label{sjw-}
\to \E\bigpar{(T_1-T_2)_-\mid T_1} = e^{-T_1},
\\ \label{sjw2}
\sum_{j=1}^m{(p_j-p_1)^2}
&=\frac{m}{S_m^2}\sum_{j=2}^m\frac{(T_j-T_1)^2}{m}
=O\Bigparfrac1{m}.
\end{align}

First, by \eqref{sjw}, 
\begin{equation}\label{maap}
  mp_1=\frac{m}{S_m}T_1\to T_1.
\end{equation}

Next, we apply \refC{CP}, noting that $\hq_1=\Phi(\gl^-,\gl^+)/m$.
Hence, by \eqref{cp2} and \eqref{sjw2}, 
$m q_1-\Phi(\gl^-,\gl^+)\to0$ a.s.
For any $\gl_1,\gl_2>0$, we can couple $\hS_1\sim\Po(\gl_1)$ and
$\hS_2\sim\Po(\gl_2)$ such that $\E|\hS_1-\hS_2|\le|\gl_1-\gl_2|$, and it follows
from \eqref{xcp} and the triangle inequality that
$|\Phi(\gl_1^-,\gl_1^+)-\Phi(\gl_2^-,\gl_2^+)|\le|\gl_1^--\gl_2^-|+|\gl_1^+-\gl_2^+|$
for any $\gl_1^-,\gl_1^+,\gl_2^-,\gl_2^+$.
Hence \eqref{sjw+}--\eqref{sjw-} imply that $\Phi(\gl^-,\gl^+)\to \Psi(T_1)$
a.s. 
Consequently, a.s.
\begin{equation}\label{maaq}
  mq_1\to \Psi(T_1).
\end{equation}
The limit \eqref{too} follows from \eqref{maap}--\eqref{maaq}.
Since $mq_1\le e\qw$ a.s. by \refC{Cmin}, \eqref{tooe} follows by dominated
convergence toghether with \eqref{xcp+}.
\end{proof}

\section{Further comments}

\subsection{Several states at once}\label{SSseveral}

Several states may suffer from the Alabama paradox at the same time. A simple
example is given in \refF{Falabama2}.

\begin{figure}
\begin{tabular}{|c|r|r|r|}
\hline
state & pop. & $\mu_i$ & seats \\
\hline
A & 28 & 1.40 &  1 \\
B & 27 & 1.35 &  1 \\
C & 27 & 1.35 &  1 \\
D & 9 & 0.45 & \bf1 \\
E & 9 & 0.45 & \bf1 \\
\hline
sum & 100 & 5.00 & 5\\
\hline
\multicolumn{4}{l}{5 seats}
\end{tabular}
\qquad
\begin{tabular}{|c|r|r|r|}
\hline
state & pop. & $\mu_i$ & seats \\
\hline
A & 28 & 1.68 &  \bf2 \\
B & 27 & 1.62 & \bf2  \\
C & 27 & 1.62 & \bf2 \\
D & 9 & 0.54 & 0 \\
E & 9 & 0.54 & 0 \\
\hline
sum & 100 & 6.00 & 6\\
\hline
\multicolumn{4}{l}{6 seats}
\end{tabular}
\caption{A double Alabama paradox for states D and E. 
Numbers in boldface are rounded up.}
\label{Falabama2}
\end{figure}
This can be analysed in the same way using the methods in \refS{Spf}. 
The typical case is when two states $i$ and $j$ are in positions 0 and 1,
and both are overtaken by three other runners. However, other similar
configurations are possible, and we leave the details to the reader except
for two simple examples.

\begin{example}
  Suppose that there are 5 states, with $p_1\ge p_2\ge p_3\ge p_4\ge p_5$
and $(p_i)$ \linQ.
If a double Alabama paradox occurs, it has to be for states 4 and 5, 
with one of them in position 0, the other in position 1, and the three
others overtaking both of them.
Letting $x$ be the distance between the runners 4 and 5, we find the
probability, using uniform distribution as before,
\begin{equation*}
  \begin{split}
	&\frac15\int_0^{p_3-p_4}(p_3-p_4-x)(p_2-p_4-x)(p_1-p_4-x)\dd x
\\&+
	\frac15\int_0^{p_4-p_5}(p_3-p_4)(p_2-p_4)(p_1-p_4)\dd x
\\&+
	\frac15\int_{p_4-p_5}^{p_3-p_5}(p_3-p_5-x)(p_2-p_5-x)(p_1-p_5-x)\dd x.
  \end{split}
\end{equation*}
The integrals are easily evaluated (preferably by computer), but the
resulting polynomial in $p_1,\dots,p_5$ does not look particularly nice or
illuminating so we omit it.
In the extreme case $p_1\approx p_2\approx p_3 \approx 1/3$,
$p_4\approx p_5\approx 0$, the probability becomes $1/810$.
\end{example}

\begin{example}
  \label{Efluid}
Consider the extreme case in \refC{Ce}, with $m-o(m)$ small states and
$o(m)$ medium-size states with the bulk of the population.
In this case, the flag is most likely carried by a small state, say $i$.
The probability that it is overtaken by a large state $j$ is $p_j-p_i\approx
p_j$, and we can approximate the distributions of the number $M$ of states
overtaking it by a Poisson distribution with mean
$\sum_j(p_j-p_i)_+\approx\sum_j p_j=1$. 
Thus $i$ loses a seat if $M>1$.
Furthermore, since there are very many
small states, their runners are very narrowly spaced, and most likely the
runners in positions $1,\dots,M-2$ are also small states, and are also
overtaken by the same $M$ states as $i$; in this case they all lose a seat.

Consequently, if the random variable 
$X_m$ is the number of states suffering from the Alabama paradox
when $n$ is increased (from a random value), then as $m\to\infty$ (in this
case), $X_m\dto X\=(Y-1)_+$ with
$Y\sim\Po(1)$. Note that 
$\P(X>0)=\P(Y>1)=1-2e^{-1}$ and 
$\E X=\E( Y-1+\ett{Y=0})\allowbreak=\P(Y=0)=e^{-1}$.
\end{example}

\subsection{The linearly dependent case}\label{SSdep}
The proof above uses Weyl's theorem and thus requires that $\ppm$ are
\linQ; indeed, as said in \refS{S:intro}, simple examples shows that
\refT{TA} does not hold for arbitrary $p_i$.
We note also that some of the corollaries  might be far from true 
in the case of rational relative sizes, see Proposition \ref{P:highE} below.
Nevertheless, it is possible to use much of the argument above also for 
the linearly dependent case. We sketch this below, leaving many details to
the reader.

Note first that in general it may happen that the remainders
$\mu_i-\floor{\mu_i}$ happen to be equal for two or more states, and 
it may be necessary to round up one or several of these and round down the
others; in this case, the choice is determined by lot.
A simple example is given in \refF{Fties}; note that state C  may suffer the 
Alabama paradox either when increasing from 4 to 5 seats or from 5 to 6,
but not at both times; the probability is $1/4$ each time. 
For the asymptotic analysis this complication is no real problem, however, since
it suffices to consider the expectation $\E\nu_i(N)$, \ie, the 
sum over $n\le N$ of the probability of an Alabama paradox at time
$n$. Indeed, although the example in \refF{Fties} shows that there may be a
dependency between the occurrence of the Alabama paradox for some number $n$
of seats and the next number $n+1$, more distant occurrences are independent;
\ie, the random sequence of occurrences of the paradox is 1-dependent.  
Hence the variance of the total number is $O(N)$, and by considering 
odd and even $n$ separately, which yields two subsequences of independent
random indicators, it is easily seen that $(\nu_i(N)-\E\nu_i(N))/N\to0$
almost surely.  

\begin{figure}
\begin{tabular}{|c|r|r|r|r|r|r|r|r|}
\hline
state & pop. & 1 & 2&3&4&5&6&7\\
\hline
A & 6 & \bf0.6 & 1.2 & \bf1.8 & \it2.4 & 3.0 & \it3.6 & 4.2 \\
B & 3 & 0.3 & \bf0.6 & \bf0.9 & 1.2 & \it1.5 & \bf1.8 & 2.1 \\
C & 1 & 0.1 & 0.2 & 0.3 & \it0.4 & \it0.5 & \it0.6 & \bf0.7 \\
\hline
\end{tabular}
\caption{Ties. Numbers in boldface are rounded up; numbers in italics may be
  rounded up or down, as determined by lot.}
\label{Fties}
\end{figure}

\begin{remark} 
  If two or several states have exactly the same population, it may be
  necessary to draw lots between them for both $n$ seats and $n+1$. If we do
  this independently, then some state might lose a seat by being unlucky the
  second time. This obvious consequence of drawing lots is not an example of
  the Alabama paradox and should be disregarded. (For example, we may list
the states with the same population in some random order, once and for all,
and use this as a priority list for each $n$.)
\end{remark}

Let $\pp\=(p_1,\dots,p_m)$.
 We now use $q_i$ for the true probability and $\tq_i$ for the value in
\eqref{ta} for any \ppm.
Consider again, for notational convenience, state 1. 
The argument in \refS{Spf} shows that, with $L_n$ given by \eqref{l}
(noting that the actual position may be different when there are ties), 
for some functions $f$, $g$ and $h$ and with $z_j=(p_j-p_1)/m$,
for simplicity assuming that $N$ is a multiple of $m$, 
\begin{align}
 \E\nu_1(N)&=\sum_{n=0}^{N-1}
 f\bigpar{\fract{n(p_2-p_1)},\dots,\fract{n(p_m-p_1)},L_n}
\notag
\\
&=\sum_{n=0}^{N-1}
g\bigpar{\fract{nz_2},\dots,\fract{nz_m},\Modm(n)}.
\notag
\\
&=\sum_{k=1}^{N/m}
 mh\bigpar{\fract{k(p_2-p_1)},\dots,\fract{k(p_m-p_1)}}.
\label{deph}
\end{align} 
(To see the last equality, write $n=(k-1)m+l$ and 
define 
$h(x_2,\dots x_m)\=
\sum_{l=0}^{m-1}g(\fract{x_2+(l-m)z_2},\dots,\fract{x_m+(l-m)z_m},l)/m$.)
\refL{LW} does not apply when
$p_1,\dots,p_m$ are linearly dependent over $\bbQ$, but the proof of it sketched
above shows that the sequence 
$\bigpar{\fract{n(p_2-p_1)},\dots,\fract{n(p_m-p_1)}}$ is \ud{} on a subgroup
of $[0,1)^{m-1}$; more precisely, the empirical distributions converge to 
the uniform probability measure $\mu_\pp$ on this
subgroup, which has Fourier coefficients given by
\begin{equation}\label{mupp}
\widehat\mu_\pp(a_2,\dots,a_m)=
\begin{cases}
  1 & \text{ if } \sum_{j=2}^ma_j(p_j-p_1)\in\bbZ,\\
0 & \text{ otherwise}.
\end{cases}
\end{equation}
The functions $f$, $g$ and $h$ are linear combinations of products of
indicators and are a.e.\ continuous; moreover, $h$ is $\mu_\pp$-a.e.\
continuous. Hence,   \eqref{deph} shows that 
$\E\nu_1(N)/N\to \int h\dd\mu_\pp$, 
showing the existence of the limit $q_1=\int h\dd\mu_\pp$ in general.
Note that in this notation, the value $\tq_1$ in \eqref{ta} is 
$\tq_1=\int h\dd\mu$, 
where $\mu$ is the uniform distribution on $[0,1)^{m-1}$. 

The functions $f$, $g$ and $h$ depend on $\pp$, \cf{} \eqref{i+}--\eqref{i-},
but if we write $h_\pp$, then $h_{\pp_k}\to h_\pp$ \aex{} for any
sequence of population distributions (for a fixed number of states)
$\pp_1,\pp_2,\dots,$ with $\pp_k\to\pp$. 
Let $\pp_k=(p_{1k},\dots,p_{mk})$.
If further,
for every integer vector $(a_1,\dots,a_m)\neq0$, we have 
$\sum_{i=1}^m a_ip_{ik}\neq0$ for all large $k$,
then it follows from \eqref{mupp} that 
$\widehat\mu_{\pp_k}(a_2,\dots,a_{m})\to0$ for all $(a_2,\dots,a_m)\neq0$,
and thus $\mu_{\pp_k}\to\mu=\mu_\pp$. It follows, \eg{} by 
\cite[Theorem 5.5]{Billingsley} that 
$q_{1,k}=\int h_{\pp_k}\dd\mu_{\pp_k}\to\int h_\pp\dd\mu_\pp$
and also
$\int h_{\pp_k}\dd\mu\to\int h_\pp\dd\mu_\pp$ so
$q_{1,k}-\int h_{\pp_k}\dd\mu\to0$.
The claim in \refR{Rind} now follows, since otherwise one could, for some
$\eps>0$, find such a sequence $\pp_k$ of distributions with  
$|q_{1,k}-\int h_{\pp_k}\dd\mu|\ge\eps$ and (by taking a subsequence)
$\pp_k\to\pp$ for some $\pp$; a contradiction.

\smallskip
We end with a couple of counterexamples in the rational case. 
The upper bound  $q_i<\frac{1}{m}e^{-1}$ in \refC{Cmin} is not true in
general, not even for $m=3$. 
As an easy example one 
may study three states with $p_1=p_2=3/7$ and $p_3=1/7$, where
$q_3=1/7>\frac{1}{3e}$, since the smallest state suffers when $n$ increases
from 3 to $4\pmod 7$.

Moreover,  the upper bound $1/e$ on the expected number of states
suffering from the paradox in \refC{Ce} 
  is not true in general. Consider the case when $p_1=\dots=p_6=1/7$ and $p_7=\dots=p_{106}=1/700$.
When \eg{} the number of seats changes from 42 to 43 all the 6 large states
will change from 6 to 7 seats. 
Thus 5 of the small states will suffer from the Alabama paradox. The paradox
will happen 98 times during the period of length 700; 
90 of these 5 small states will suffer, but the number is smaller close to
the beginning and end of the period
($n=7$, 14, 21, 28 or 671, 678, 685, 692 $\pmod {700}$) and 
the expected number of states to suffer from the Alabama paradox for a
random number of seats is  
\begin{equation*}
\frac{90\cdot 5 + 2\cdot(1+2+3+4)}{700}=\frac{47}{70},
\end{equation*}
which is much larger than $1/e$. This can be generalized in the following way. 

\begin{proposition}\label{P:highE}
Let $x\ge2$ and $y$ be positive integers, with $x$ relatively prime to $y-1$
and $x^2-3x<y$.
Assume that the number of states is $m=y+x-1$ with 
relative sizes $p_1=\dots=p_{x-1}=\frac{1}{x}$ and
$p_x=\dots=p_m=\frac{1}{x\cdot y}$.  
Then the expected number of states suffering from the Alabama paradox equals
\begin{equation}
    \label{big}
\frac{(x-2)(y-x+1)}{x y},
\end{equation}
which in particular can be made arbitrarily close to $1$.
\end{proposition}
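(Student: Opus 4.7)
Since all $p_i$ are rational with $xy p_i\in\bbZ$, the Hamilton process is periodic in $n$ with period~$xy$; the expected number of states suffering from the paradox therefore equals $L/(xy)$, where $L$ is the total number of seat-losses (summed over all states) incurred during the $xy$ transitions $n\to n+1$ in one period.  The plan is to compute $L$ directly.

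I would first parameterize $n\equiv ax+b\pmod{xy}$ with $0\le a<y$ and $0\le b<x$.  In one period, each large state has $\floor{np_i}=a$ with remainder $b/x$, each small state has $\floor{np_i}=0$ with remainder $(ax+b)/(xy)$, and the deficit (number rounded up) is $a+b$.  As in the remark of \refSS{SSdep}, fix a priority list within each equal-size class so the rounded-up set is determined at every~$n$, and write $T\=\bigett{b(y-1)>ax}$ for the indicator that the large remainder exceeds the small one.

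Next I would split the transitions $n\to n+1$ into Case~A ($b<x-1$) and Case~B ($b=x-1$), and verify three qualitative claims limiting where Alabama can occur: a large state never loses a seat (in Case~A its base $\floor{n/x}$ is unchanged and its remainder only grows; in Case~B its base jumps up by one); in Case~A the indicator $T$ can flip only from $0$ to $1$, and no state loses unless this flip happens; in Case~B a small state loses iff $T=0$ at~$n$.  The coprimality $\gcd(x,y-1)=1$ guarantees that for each $b$ the Case~A flip happens at uniquely determined values of $a$ (namely those with $b=\floor{ax/(y-1)}$), so the total number of Case~A flips equals $\floor{(x-1)(y-1)/x}$ and the total number of Case~B Alabama events equals $\floor{(y-1)/x}$.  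A short calculation of $\alpha_S$ and $\alpha_S'$ gives the per-event loser counts: $\min(a+b,\,x-2)$ at a Case~A flip, and $x-2$ (if $a\le y-x+1$) or $y-1-a$ (if $y-x+2\le a\le y-2$) at a Case~B event.

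Finally I would invoke the hypothesis $y>x(x-3)$, which is equivalent to $(y+x-1)/x\ge x-2$, to conclude that every Case~A flip with $b\ge1$ satisfies $a+b\ge b(y+x-1)/x\ge x-2$ and so contributes exactly $x-2$ losers.  This splits the Alabama transitions into \emph{edge} events ($b=0,\,a=1,\dotsc,x-3$ in Case~A, and $a=y-x+2,\dotsc,y-2$ in Case~B, contributing losers $1,2,\dotsc,x-3$) and \emph{bulk} events (all others, each contributing $x-2$ losers).  The two edge families contribute a symmetric total of $2\cdot\tfrac{(x-3)(x-2)}{2}=(x-3)(x-2)$ losses, while the bulk count
\begin{equation*}
\Bigpar{\floor{(x-1)(y-1)/x}+\floor{(y-1)/x}}-2(x-3)=(y-2)-2(x-3)=y-2x+4
\end{equation*}
uses the elementary identity $\floor{(x-1)(y-1)/x}+\floor{(y-1)/x}=y-2$, which holds because $(y-1)/x\notin\bbZ$ by coprimality.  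Hence
\begin{equation*}
L=(x-3)(x-2)+(x-2)(y-2x+4)=(x-2)(y-x+1),
\end{equation*}
giving \eqref{big} after division by $xy$.  Taking, e.g., $y=x^2$ (for which $\gcd(x,y-1)=\gcd(x,x^2-1)=1$ automatically) then makes $(x-2)(y-x+1)/(xy)$ arbitrarily close to~$1$.

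The main obstacle is the case analysis in the third paragraph and the bookkeeping in the last: one must carefully track the behaviour of the rounded-up subset across each type of transition, and verify that the ``edge'' ranges combine without double-counting in both of the sub-regimes $\floor{(y-1)/x}\ge x-2$ and $\floor{(y-1)/x}=x-3$ permitted by the hypothesis $y>x(x-3)$.
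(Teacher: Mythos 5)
Your proposal is correct and follows essentially the same route as the paper's proof: both work within one period of length $xy$, locate the paradox transitions at $n=ax+b$ with $b=\floor{ax/(y-1)}$ (one for each $a=1,\dots,y-2$), count $x-2$ losers per generic transition with boundary corrections $1,\dots,x-3$ at each end of the period, and sum to $(x-2)(y-x+1)$. The only (harmless) imprecision is your Case B ``iff'' claim at $a=y-1$, where $T=0$ but no state loses; since your event count $\floor{(y-1)/x}$ already excludes this zero-loss transition, the final arithmetic is unaffected.
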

\begin{proof} The values of $n\pmod {xy}$
where the Alabama paradox might occur are, \eg{} by considering runners as
in \refS{Spf}, 
$a x+b$, for integers $1\le a\le y-2$ and $0\le b\le x-1$ such that 
$ax+b>by$ and $ax+b+1<(b+1)y$, \ie{}
$b=\floor{ax/(y-1)}$. There are $y-2$ such values $ax+b$, one for each
$a\in\set{1,\dots,y-2}$.  

If $x-2\le a\le y-x+1$, then for $n=ax+b$, with $b$ as above, the $x-1$
large states will get $a$ seats each and the remaining $a+b$ seats will go
to $a+b$ of the small states. For $n=ax+b+1$, the larger states will get
$a+1$ seats each and only $a+b-x+2$ seats are left to the small states; thus
$x-2$ small states will suffer from the Alabama paradox.
If $1\le a\le x-3$, then $b=0$, and the number of small states receiving a
seat will drop from $a+b=a$ to 0 as $n$ increases from $ax+b$ to $ax+b+1$.
Finally, if $y-x+2\le a\le y-2$, then $b=x-1$ and the number will drop from
$y$ to $a+b-x+2=a+1$, so $y-1-a$ states will suffer.
Summing these numbers give a total of 
$(y-2x+4)(x-2)+2\sum_{i=1}^{x-3}i=(y-x+1)(x-2)$ states suffering
in the period $xy$.

Taking \eg{} $y=x^2$ and letting $x\to\infty$, the expected number
\eqref{big} tends to 1.
\end{proof}
We thank Warren D. Smith for asking a question that made us produce Proposition \ref{P:highE}.
It would be interesting to see a proof of 1 being a general upper bound or
an example to the contrary.

\newcommand\AMS{Amer. Math. Soc.}
\newcommand\Springer{Springer-Verlag}
\newcommand\Wiley{Wiley}

\newcommand\vol{\textbf}
\newcommand\jour{\emph}
\newcommand\book{\emph}
\newcommand\inbook{\emph}
\def\no#1#2,{\unskip#2, no. #1,} 
\newcommand\toappear{\unskip, to appear}

\newcommand\urlsvante{\url{http://www.math.uu.se/~svante/papers/}}
\newcommand\arxiv[1]{\url{arXiv:#1.}}
\newcommand\arXiv{\arxiv}

\def\nobibitem#1\par{}

\end{document}

\end{document}